\newtheorem{theorem}{Theorem}[section]
\newtheorem{lemma}[theorem]{Lemma}
\newtheorem{corollary}[theorem]{Corollary}
\newtheorem{problem}[theorem]{Problem}
\theoremstyle{definition}
\newtheorem{example}[theorem]{Example}
\theoremstyle{remark}
\numberwithin{equation}{section}
\begin{document}
\title{Continuity of the cone spectral radius}


\author{Bas Lemmens}
\address{School of Mathematics, Statistics \& Actuarial Science, Cornwallis Building, 
University of Kent, Canterbury, Kent CT2 7NF, UK}
\curraddr{}
\email{B.Lemmens@kent.ac.uk}
\thanks{}

\author{Roger Nussbaum}
\address{Department of Mathematics, 
Rutgers, The State University Of New Jersey, 
110 Frelinghuysen Road, Piscataway, NJ 08854-8019, USA}
\curraddr{}
\email{nussbaum@math.rutgers.edu}
\thanks{The second  author was partially supported by NSF DMS-0701171}

\subjclass[2010]{Primary 47H07; Secondary 47H10, 47H14}

\keywords{Cone spectral radius, continuity,  cone spectrum, nonlinear cone maps, fixed point index}

\date{July 2011}

\dedicatory{}

\begin{abstract}

This paper concerns the question whether the cone spectral radius $r_C(f)$ of a continuous compact order-preserving homogenous map $f\colon C\to C$ on a closed cone $C$ in Banach space $X$ depends continuously on the map. Using the fixed point index we show that if there exists $0<a_1<a_2<a_3<\ldots$ {\bf not} in the cone spectrum, $\sigma_C(f)$, and $\lim_{k\to\infty} a_k = r_C(f)$, then the cone spectral radius is continuous. 
An example is presented showing that if such a sequence $(a_k)_k$ does not exist, continuity may fail. We also analyze the cone spectrum of continuous order-preserving homogeneous maps on finite dimensional closed cones. In particular,  we prove that if $C$ is a polyhedral cone with $m$ faces, then $\sigma_C(f)$ contains at most $m-1$ elements, and this upper bound is sharp for each polyhedral cone. Moreover, for each non-polyhedral cone there exist maps whose cone spectrum is infinite. 
\end{abstract}

\maketitle
\section{Introduction}
In their seminal paper \cite{KR} Krein and Rutman showed, besides the now famous Krein-Rutman Theorem, that some of the spectral properties of compact linear maps leaving invariant a closed cone $C$ in Banach space $X$, can be generalized to certain classes of nonlinear maps. In particular, they considered continuous compact maps $f\colon C\to C$ which are homogeneous of degree one and which preserve the partial ordering induced by $C$. 

If $f\colon C\to C$ is a continuous, homogeneous of degree one map which preserves the partial ordering induced by $C$, one can define the {\em Bonsall cone spectral radius} of $f$ by 
\[
\tilde{r}_C(f)=\lim_{k\to\infty} \|f^k\|_C^{1/k}=\inf_{k\geq 1} \|f^k\|_C^{1/k}
\]
where
\[
\|f^k\|_C =\sup\{\|f^k(x)\|\colon x\in C\mbox{ and }\|x\|\leq 1\}.
\]
This definition of the  cone spectral radius  was introduced by  Bonsall  \cite{Bon} in the context of linear maps on cones. An alternative notion of the cone spectral radius was considered by Mallet-Paret and Nussbaum in \cite[Section 2]{MN1}, and is defined as follows. For $x\in C$, let 
\[
\mu(x)=\limsup_{k\to\infty} \|f^k(x)\|^{1/k}
\]
and define $r_C(f) =\sup \{\mu(x)\colon x\in C\}$. In many important cases it  is known that $\tilde{r}_C(f)=r_C(f)$: see \cite[Theorems 2.2 and 2.3]{MN1} and \cite[Theorem 3.4]{MN2}. In particular, if $f^m$ is compact for some integer $m\geq 1$, then $\tilde{r}_C(f)=r_C(f)$. It remains an open question whether the equality holds for general nonlinear maps $f\colon C\to C$ as above. 

If $C$ is a closed cone in a Banach space $X$ and $f\colon C\to C$ is a continuous, compact, homogeneous (degree one) map preserving the partial ordering induced by $C$ and $r_C(f)>0$, then it follows from \cite[Theorem 2.1]{Nu1} that there exists $u\in C$ with $\|u\|=1$ such that 
\[
f(u) =r_C(f) u.
\]
More precisely, if one applies \cite[Theorem 2.1]{Nu1} to the map $x\mapsto (1/t)f(x)$, where $0<t<r_C(f)$, one deduces as a special case that $f$ has an eigenvector of norm one in $C$  with eigenvalue greater than or equal to $t$. A simple limiting argument implies that $f$ has  an eigenvector in $C$ with eigenvalue $r_C(f)$.
More general results concerning the existence of an eigenvector with eigenvalue $r_C(f)$ can be found in \cite{MN1,MN2}.

In this paper we study the continuity of $r_C(f)$. More precisely we discuss the following problem. Suppose that $f\colon C\to C$ and $f_k\colon C\to C$, for $k\geq 1$,  are continuous compact  homogeneous order-preserving maps such that 
\begin{equation}\label{eq:1.1} 
\lim_{k\to\infty} \|f-f_k\|_C =0.
\end{equation}
Under what further conditions on $f$ does the equality, 
\begin{equation}\label{eq:1.2}
\lim_{k\to\infty} r_C(f_k)=r_C(f),
\end{equation}
hold?  
A similar problem was considered by Kakutani \cite[pp.282--283]{Rick}, who gave an example of a bounded linear map on $\ell_2$ whose spectral radius is discontinuous in the $C^*$-algebra of bounded linear maps on $\ell_2$. The problem was further studied in the setting of Banach algebras in  \cite{Bur,Mo1,Mo2,Mur,New}. 

By using the fixed point index, we prove that if there exist $0<a_1< a_2 < a_3<\ldots$ such that 
\[
\lim_{k\to\infty} a_k=r_C(f)
\] and each $a_k$ is {\bf not} in the {\em cone spectrum}, 
\[
\sigma_C(f)=\{\rho\geq 0\colon f(x)=\rho x\mbox{ for some }x\in C\mbox{ with }x\neq 0\},
\]
of $f$, then (\ref{eq:1.2}) holds. In addition, we provide an example of a linear map 
$T\colon C\to C$ on a closed total cone $C$ in an infinite dimensional Banach space $X$, such that $T$ is compact on $C$, but not  compact as a linear map on the whole of $X$, and for which $(0,r_C(T)]=\sigma_C(T)$,  $r_C(T)>0$, and (\ref{eq:1.2}) fails. We will also give an example of a continuous order-preserving homogeneous map $f$ on a finite dimensional closed cone $C$ with $\sigma_C(f)=[0,1]$. So far, however, 
no finite dimensional example is known for which (\ref{eq:1.2}) fails. 
If $C$ is a polyhedral cone with $m$ faces, we will prove that $\sigma_C(f)$ contains at most $m-1$ points, and show that this upper bound is sharp for every polyhedral cone. 
Thus, the cone spectral radius is continuous for maps on polyhedral cones. 
We will also show that on each  finite dimensional closed non-polyhedral cone $C$, there exists a continuous order-preserving homogeneous map $f\colon C\to C$ such that $\sigma_C(f)$ contains a countably infinite number of distinct points.  

\section{A condition on the cone spectrum}
Throughout the exposition $X$ will be a real  Banach space and $C$ will be a closed cone in $X$. Recall  that $C\subseteq X$ is a {\em cone} if $C$ is convex, 
$\lambda C\subseteq C$ for all  $\lambda\geq 0$, and  $C\cap (-C) =\{0\}$. A cone $C$ in $X$ is said to be {\em total } if  $X=\mathrm{cl}(C-C)$. A cone $C$ induces a partial ordering, $\leq$, on $X$ by $x\leq y$ if $y-x\in C$.  A map $f\colon C\to C$ is said to 
be {\em homogeneous} (of degree one) if $\lambda f(x) =f(\lambda x)$ for all $\lambda \geq 0$ and $x\in C$. It is said to be {\em order-preserving } if $f(x)\leq f(y)$
whenever $x\leq y$. Furthermore $f\colon C\to C$ is called {\em compact} if  for each $D\subseteq C$ bounded, $\mathrm{cl}(f(D))$ is compact. 

The following lemma is elementary, but useful in the sequel. 
\begin{lemma}\label{lem:2.1} 
If $S$ is compact subset of a complete metric space $(M,d)$ and $(x_k)_k$ is a sequence in $M$ with 
\begin{equation}\label{eq:2.1} 
\lim_{k\to\infty} d(x_k,S)=0,
\end{equation}
then $(x_k)_k$ has a convergent subsequence with limit in $S$. 
\end{lemma}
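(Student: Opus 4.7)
The plan is straightforward and exploits both the definition of distance to a set and the sequential compactness of $S$. First I would, for each $k$, choose a point $s_k \in S$ that nearly realizes the distance $d(x_k, S)$. Because $S$ is compact (hence closed and, moreover, the continuous function $s \mapsto d(x_k, s)$ attains its infimum on $S$), one can actually pick $s_k \in S$ with $d(x_k, s_k) = d(x_k, S)$; but even without this, taking $s_k$ with $d(x_k, s_k) \le d(x_k, S) + 1/k$ is enough. By hypothesis $d(x_k, s_k) \to 0$.

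Next I would apply sequential compactness to the sequence $(s_k)_k \subseteq S$: it has a subsequence $(s_{k_j})_j$ converging to some $s \in S$. The triangle inequality then gives
\[
d(x_{k_j}, s) \le d(x_{k_j}, s_{k_j}) + d(s_{k_j}, s),
\]
and both terms on the right tend to $0$ as $j \to \infty$. Hence $x_{k_j} \to s \in S$, which is exactly the conclusion.

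There is essentially no obstacle here; the only subtle point is remembering that compactness of $S$ is used twice, once implicitly (to guarantee that $d(x_k, S)$ is attained, or at least nearly attained at a fixed element of $S$) and once explicitly (to extract a convergent subsequence of $(s_k)_k$). Completeness of $M$ plays no direct role in this argument, since the candidate limit is produced inside the compact set $S$; it is presumably stated only to match the setting of the rest of the paper.
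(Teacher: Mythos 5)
Your proposal is correct and matches the paper's own argument essentially step for step: pick $s_k \in S$ within $d(x_k,S)+1/k$ of $x_k$, extract a convergent subsequence of $(s_k)_k$ by compactness, and conclude via the triangle inequality. Your side remark that completeness of $M$ is never actually used is also accurate.
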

\begin{proof}
Recall that $S$ is compact in $(M,d)$ if and only if every sequence in $S$ has a convergent subsequence. Let $a_k = d(x_k,S)$ and let $b_k=a_k+1/k$, so $b_k\to 0$ as $k\to\infty$. From the assumption we know that there exists $s_k\in S$  such that $d(x_k,s_k)<b_k$. As $S$ is compact, the sequence $(s_k)_k$ has a convergent subsequence with limit  $s\in S$. The corresponding subsequence of $(x_k)_k$ also converges to $s$.
\end{proof}
Another useful basic lemma is the following. 
\begin{lemma}\label{lem:2.1b}
If  $f\colon C\to C$  is a continuous homogeneous  order-preserving map on  a closed cone $C$ in $X$, and $av\leq f(v)$ for some $a>0$ and $v\in C\setminus\{0\}$, then 
$a\leq r_C(f)$.
\end{lemma}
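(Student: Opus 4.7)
The plan is to show, by iterated application of homogeneity and order-preservation, that $a^k v\le f^k(v)$ for every $k\ge 1$, and then to turn this coordinate-free inequality into a lower bound on $\|f^k(v)\|$ by pairing against a suitable continuous linear functional.

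For the first step I would argue by induction. The hypothesis gives $av\le f(v)$. Assume $a^k v\le f^k(v)$; applying $f$ (order-preserving) gives $f(a^k v)\le f^{k+1}(v)$, and by homogeneity $f(a^k v)=a^k f(v)\ge a^k\cdot av=a^{k+1}v$. Since $\le$ is transitive, $a^{k+1}v\le f^{k+1}(v)$, completing the induction.

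The key passage from order to norm uses Hahn--Banach. Because $C$ is a closed cone with $C\cap(-C)=\{0\}$ and $v\neq 0$, the point $-v$ lies outside the closed convex set $C$. Separation yields $\phi\in X^*$ and $\alpha\in\mathbb{R}$ with $\phi(-v)<\alpha\le\phi(x)$ for every $x\in C$; since $C$ is a cone the infimum of $\phi$ on $C$ must be $0$, forcing $\phi\ge 0$ on $C$ and $\phi(v)>0$. Applying $\phi$ to the inclusion $f^k(v)-a^kv\in C$ gives
\[
\phi(f^k(v))\ge a^k\phi(v),
\]
and then the trivial bound $|\phi(f^k(v))|\le\|\phi\|\,\|f^k(v)\|$ yields
\[
\|f^k(v)\|\ \ge\ \frac{\phi(v)}{\|\phi\|}\,a^k.
\]

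Taking $k$-th roots and letting $k\to\infty$ gives $\mu(v)=\limsup_k\|f^k(v)\|^{1/k}\ge a$, since the prefactor $(\phi(v)/\|\phi\|)^{1/k}$ tends to $1$. By the definition $r_C(f)=\sup_{x\in C}\mu(x)$ we conclude $r_C(f)\ge\mu(v)\ge a$. The only nontrivial point is the Hahn--Banach step, and that is routine given the standing assumptions on $C$; no compactness of $f$ is needed.
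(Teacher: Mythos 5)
Your proof is correct, and its second half takes a genuinely different route from the paper's. The inductive step giving $a^k v\le f^k(v)$ is identical in both arguments. From there the paper proceeds by contradiction: assuming $a>r_C(f)$, one picks $\epsilon>0$ with $a>r_C(f)+\epsilon$, notes that the definition of $r_C(f)$ forces $\|f^k(v)\|\le (r_C(f)+\epsilon)^k$ for all large $k$, hence $(1/a)^k f^k(v)\to 0$; since $(1/a)^k f^k(v)-v\in C$ and $C$ is closed, the limit gives $-v\in C$, contradicting $C\cap(-C)=\{0\}$. You instead argue directly: Hahn--Banach strict separation of the point $-v$ from the closed convex set $C$ produces $\phi\in X^*$ with $\phi\ge 0$ on $C$ and $\phi(v)>0$ (your verification that the cone's scaling invariance forces $\phi\ge 0$ on $C$ is right), and pairing against $\phi$ turns the order inequality into the quantitative bound $\|f^k(v)\|\ge \bigl(\phi(v)/\|\phi\|\bigr)a^k$, whence $\mu(v)\ge a$ and so $r_C(f)\ge a$. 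The paper's argument is more self-contained, needing only closedness of $C$ and the definition of $r_C(f)$, with no duality; your argument pays the (routine) price of a separation theorem but is direct rather than by contradiction, and it buys a bit more: an explicit exponential lower bound on $\|f^k(v)\|$ and the pointwise conclusion $\mu(v)\ge a$, rather than only the bound on the supremum $r_C(f)$.
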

\begin{proof}
Assume, for the sake of contradiction, that $a>r_C(f)\geq 0$. 
Select $\epsilon >0$ such that $a>r_C(f)+\epsilon$. A simple induction argument shows that $v\leq (1/a)^kf^k(v)$ for all $k\geq 1$. The definition of $r_C(f)$, however, implies that $\|f^k(v)\|\leq (r_C(f) +\epsilon)^k $ for all $k$ sufficiently large. Since $a>r_C(f)+\epsilon$, we deduce that 
$ (1/a)^k f^k(v)\to 0$ as $k\to\infty$. But $(1/a)^k f^k(v) -v \in C$ for all $k$; so, letting $k\to\infty$ we find that $-v\in C$, which is impossible, as $v\in C\setminus\{0\}$ and $C\cap (-C)=\{0\}$. 
\end{proof}
 
The next lemma shows that the cone spectral radius is  upper-semicontinuous. 
\begin{lemma}\label{lem:2.2} 
Suppose that $C$ is a closed cone in $X$ and $f\colon C\to C$ is a continuous compact homogeneous order-preserving map. If for each $k\geq 1$, $f_k\colon C\to C$ is a continuous compact homogeneous order-preserving map such that 
\[
\lim_{k\to\infty} \|f-f_k\|_C =0,
\]
then $\limsup_{k\to\infty} r_C(f_k)\leq r_C(f)$.
\end{lemma}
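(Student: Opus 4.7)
My plan is to exploit the Bonsall formula $r_C(f) = \tilde r_C(f) = \inf_{n\ge 1}\|f^n\|_C^{1/n}$, which applies here since $f$ is compact (so $f^n$ is compact for every $n$, being a continuous image of a relatively compact set). Given $\epsilon>0$, I pick $n$ large enough that $\|f^n\|_C^{1/n} < r_C(f)+\epsilon/2$. The main work is to show that for this fixed $n$,
\[
\lim_{k\to\infty}\|f_k^n - f^n\|_C = 0.
\]
Once this is done, one has $\|f_k^n\|_C^{1/n}\to \|f^n\|_C^{1/n}$, and because $r_C(f_k)\le \|f_k^n\|_C^{1/n}$ (applying Bonsall's formula to $f_k$, using that each $f_k$ is also compact), it follows that $\limsup_{k\to\infty} r_C(f_k)\le r_C(f)+\epsilon$. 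Since $\epsilon$ was arbitrary, the lemma is proved.

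To prove the key uniform convergence, I would argue by induction on $n$. First, note $M:=\sup_k \|f_k\|_C <\infty$ since $|\|f_k\|_C - \|f\|_C|\le \|f_k-f\|_C$, and by homogeneity $\|g(y)\|\le \|g\|_C \|y\|$ for any $y\in C$ and any such $g$. The base case $n=1$ is the hypothesis. For the inductive step, I decompose
\[
f_k^n(x)-f^n(x) = \bigl[f_k(f_k^{n-1}(x))-f(f_k^{n-1}(x))\bigr] + \bigl[f(f_k^{n-1}(x))-f(f^{n-1}(x))\bigr].
\]
The first bracket is estimated for $x\in C$ with $\|x\|\le 1$ by $\|f_k-f\|_C\cdot \|f_k^{n-1}(x)\|\le M^{n-1}\|f_k-f\|_C$, which tends to $0$ uniformly in $x$.

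The second bracket is the main obstacle, and this is where compactness of $f$ enters. Since $f^{n-1}$ is compact, the set $K:=\overline{f^{n-1}(B_C\cap C)}$ is compact (here $B_C$ denotes the closed unit ball). By the inductive hypothesis, $\|f_k^{n-1}(x)-f^{n-1}(x)\|\to 0$ uniformly in $x\in B_C\cap C$, so for large $k_0$ the set
\[
K_*:=\overline{K\cup \{f_k^{n-1}(x): k\ge k_0,\ x\in B_C\cap C\}}
\]
is totally bounded — a finite $\epsilon/2$-cover of $K$ becomes an $\epsilon$-cover of $K_*$ — and hence compact. Now $f$ restricted to $K_*\cap C$ is continuous on a compact set, so it is uniformly continuous there. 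Combining uniform continuity of $f$ on $K_*\cap C$ with the inductive uniform convergence $f_k^{n-1}\to f^{n-1}$, the second bracket tends to $0$ uniformly in $x$. Adding the two estimates completes the induction and hence the proof.
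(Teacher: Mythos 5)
Your proof is correct, but it takes a genuinely different route from the paper's. The paper argues through eigenvectors: since each $f_k$ is compact, \cite[Theorem 2.1]{Nu1} together with $r_C(f_k)=\tilde{r}_C(f_k)$ produces $x_k\in C$ with $\|x_k\|=1$ and $f_k(x_k)=r_C(f_k)x_k$; compactness of $f$ (via Lemma \ref{lem:2.1}) then yields a subsequence along which $x_k\to u$ and $f(u)=\lambda u$, where $\lambda=\limsup_{k\to\infty} r_C(f_k)$, whence $\lambda\leq r_C(f)$ by Lemma \ref{lem:2.1b}. You instead work entirely with norms of iterates, using the Bonsall formula together with the inductive uniform convergence $\|f_k^n-f^n\|_C\to 0$. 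Your route avoids the fixed-point-theoretic existence theorem \cite[Theorem 2.1]{Nu1} and Lemma \ref{lem:2.1b} altogether; it needs the nontrivial identity $r_C=\tilde{r}_C$ only for the limit map $f$, since for the $f_k$ you only use the easy inequality $r_C(f_k)\leq \tilde{r}_C(f_k)\leq \|f_k^n\|_C^{1/n}$; and it requires no case distinction when $r_C(f)=0$ or $\limsup_k r_C(f_k)=0$. The paper's argument is shorter given its toolkit and yields extra information, namely an eigenvector of $f$ with eigenvalue $\limsup_k r_C(f_k)$, a pattern it reuses in Theorem \ref{thm:2.3}.

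One step needs tightening: your one-line justification that $K_*$ (built from a \emph{fixed} $k_0$) is totally bounded does not work as stated. An $\epsilon/2$-net of $K$ covers only those points $f_k^{n-1}(x)$ with $k\geq k_\epsilon$, where $k_\epsilon$ is the threshold beyond which $\sup_x\|f_k^{n-1}(x)-f^{n-1}(x)\|<\epsilon/2$; since $k_\epsilon\to\infty$ as $\epsilon\downarrow 0$, the indices $k_0\leq k<k_\epsilon$ are left uncovered. The claim is nevertheless true: each $f_k$ is compact by hypothesis, so each set $f_k^{n-1}(B_C\cap C)$ is relatively compact, and the finitely many exceptional indices can be given their own finite $\epsilon$-net. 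Alternatively, you can dispense with $K_*$ entirely by a sequential argument: if the second bracket did not tend to $0$ uniformly, choose $\delta>0$, $k_i\to\infty$ and $x_i\in B_C\cap C$ with $\|f(f_{k_i}^{n-1}(x_i))-f(f^{n-1}(x_i))\|\geq\delta$; compactness of $K$ gives $f^{n-1}(x_{i_j})\to y\in K\subseteq C$ along a subsequence, the inductive hypothesis forces $f_{k_{i_j}}^{n-1}(x_{i_j})\to y$ as well, and continuity of $f$ at $y$ makes both terms converge to $f(y)$, a contradiction.
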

\begin{proof}
Let $\lambda=\limsup_{k\to\infty} r_C(f_k)$. Note that the case $\lambda= 0$ is trivial. So, assume that $\lambda >0$ and write $\lambda_k=r_C(f_k)$ for all $k\geq 1$. 
By \cite[Theorem 2.1]{Nu1} and the fact that $r_C(f_k)=\tilde{r}_C(f_k)$, see \cite[Theorems 2.2 and 2.3]{MN1}, there exist $x_k\in C$ with $\|x_k\|=1$ and $f_k(x_k)=\lambda_k x_k$ for all $k\geq 1$. By taking a subsequence we may assume that $\lim_{k\to\infty} \lambda_k=\lambda$. 

Let $\Sigma=\{x\in C\colon \|x\|=1\}$ and note that $\mathrm{cl}(f(\Sigma))$ is compact.  Moreover, as $\lim_{k\to\infty} \|f-f_k\|_C=0$, 
\[
\lim_{k\to\infty} d(f_k(x_k), \mathrm{cl}(f(\Sigma))) =0. 
\]
By Lemma \ref{lem:2.1} $(f_k(x_k))_k$ has a convergent subsequence $(f_{k_i}(x_{k_i}))_i$ with limit $v\in C$. Put $u=v/\lambda$. As $f_{k_i}(x_{k_i})=\lambda_{k_i} x_{k_i}$ and $\lim_{i\to\infty} \lambda_{k_i} =\lambda$, we see that 
\[
\lim_{i\to\infty} \|x_{k_i}- u\|=\lim_{i\to\infty} \| \frac{\lambda_{k_i}}{\lambda} x_{k_i}- u\| = 
\lim_{i\to\infty} \frac{1}{\lambda} \|\lambda_{k_i} x_{k_i} -v\| =0.
\]
As $f$ is continuous and $\lim_{i\to\infty} \|f(x_{k_i}) -f_{k_i}(x_{k_i})\| =0$, we deduce that 
\[
\lim_{i\to\infty} \|f(x_{k_i}) -\lambda_{k_i}x_{k_i}\|=0=\|f(u) -\lambda u\|.
\]
Thus, $f(u)=\lambda u$ and hence $\lambda \leq r_C(f)$ by Lemma \ref{lem:2.1b}. 
\end{proof}

The following theorem gives a condition on $\sigma_C(f)$ that insures that the cone spectral radius is lower-semicontinuous. The proof relies on ideas from topological degree theory. More to the point, it uses basic properties of the fixed point index, $i_W(g,V)$, where $W$ is a closed convex subset of $X$,  $V$ is a relatively open subset of $W$, and $g\colon V\to W$ is compact. A discussion of  the fixed point index in this setting along with further references to the literature can be found in \cite{DG} and in \cite{Nus}. In our case $W$ will be equal to $C$. 
\begin{theorem}\label{thm:2.3} 
Suppose that $C$ is a closed cone in $X$ and $f\colon C\to C$ is a continuous compact homogeneous order-preserving map. If for each $k\geq 1$, $f_k\colon C\to C$ is a continuous compact homogeneous order-preserving map such that 
\[
\lim_{k\to\infty} \|f-f_k\|_C =0,
\]  
then for each $0<\lambda<r_C(f)$ with  $\lambda \not\in \sigma_C(f)$ we have that  
$\liminf_{k\to\infty} r_C(f_k)\geq \lambda$. 
\end{theorem}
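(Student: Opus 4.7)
The plan is to use the fixed point index machinery on the rescaled maps $g=(1/\lambda)f$ and $g_k=(1/\lambda)f_k$, and argue by contradiction: if $r_C(f_k)<\lambda$ for infinitely many $k$, then $r_C(g_k)<1$, and we will show this is incompatible with $r_C(g)>1$ combined with the hypothesis $\lambda\not\in\sigma_C(f)$.

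\textbf{Step 1 (set up the rescaled maps).} Put $g=(1/\lambda)f$, $g_k=(1/\lambda)f_k$. Then $g,g_k$ are continuous, compact, homogeneous, and order-preserving, with $\|g-g_k\|_C\to0$. Also $r_C(g)=r_C(f)/\lambda>1$, while the hypothesis $\lambda\not\in\sigma_C(f)$ means that $g$ has \emph{no} fixed point in $C\setminus\{0\}$. Pass to a subsequence (still indexed by $k$) and assume for contradiction that $r_C(g_k)<1$ for every $k$. Fix any $R>0$, let $U_R=\{x\in C:\|x\|<R\}$, and we will compute the fixed point index $i_C(\,\cdot\,,U_R)$.

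\textbf{Step 2 (compute $i_C(g_k,U_R)=1$).} Since $r_C(g_k)<1$, Lemma \ref{lem:2.1b} forces $g_k$ to have no nonzero fixed point in $C$ (any such fixed point would give $r_C(g_k)\geq 1$). Use the scaling homotopy $H(t,x)=tg_k(x)$, $t\in[0,1]$. This homotopy is compact, and any fixed point in $\overline{U_R}$ along it would either be $0$ (if $t=0$) or satisfy $g_k(x)=(1/t)x$ with $t\in(0,1]$, giving $r_C(g_k)\geq 1/t\geq 1$, a contradiction. Homotopy invariance gives $i_C(g_k,U_R)=i_C(0,U_R)=1$.

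\textbf{Step 3 (compute $i_C(g,U_R)=0$).} Since $r_C(g)>1$ and $g$ is continuous, compact, homogeneous, and order-preserving, Theorem 2.1 of \cite{Nu1} (applied as in the introduction) yields $u\in C\setminus\{0\}$ and $\mu>1$ with $g(u)=\mu u$. Consider the compact homotopy $H(t,x)=g(x)+tu$ for $t\in[0,T]$. The map $g$ itself has no nonzero fixed point, so there is no fixed point on $\partial U_R$ at $t=0$. For $t>0$, if $x=g(x)+tu$ then $x\geq tu$; applying $g$ repeatedly and using homogeneity, order-preservation, and $g(u)=\mu u$, one gets by induction
\[
x\;\geq\;t(1+\mu+\mu^2+\cdots+\mu^n)\,u
\]
for every $n\geq 1$. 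Since $\mu>1$, this forces $\|x\|\to\infty$, so for $T$ large enough $H(T,\cdot)$ has no fixed point in $\overline{U_R}$ and no fixed point appears on $\partial U_R$ for any $t\in[0,T]$. Homotopy invariance then gives $i_C(g,U_R)=i_C(H(T,\cdot),U_R)=0$.

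\textbf{Step 4 (homotopy between $g$ and $g_k$).} For the linear homotopy $H_k(t,x)=(1-t)g(x)+tg_k(x)$, $t\in[0,1]$, suppose that for infinitely many $k$ there exist $t_k\in[0,1]$ and $x_k\in\partial U_R$ with $H_k(t_k,x_k)=x_k$. Then $\|x_k-g(x_k)\|\leq t_k\|g_k(x_k)-g(x_k)\|\leq \|g_k-g\|_C\cdot R\to 0$. Since $g$ is compact and $\|x_k\|=R$, a subsequence $g(x_{k_i})$ converges, hence so does $x_{k_i}\to x^*\in\partial U_R$ with $g(x^*)=x^*$, contradicting the fact that $g$ has no nonzero fixed point. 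Therefore, for all $k$ large enough, $H_k$ has no fixed point on $\partial U_R$, and homotopy invariance yields $i_C(g_k,U_R)=i_C(g,U_R)$. But Steps 2 and 3 give $1=0$, a contradiction. Thus $\liminf_k r_C(f_k)\geq\lambda$.

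The main technical obstacle is Step 3, the vanishing of $i_C(g,U_R)$: it is essential that the eigenvector $u$ of $g$ with eigenvalue $\mu>1$ combined with the order-preservation and homogeneity of $g$ rules out fixed points of the perturbed map $g(\cdot)+tu$ in any ball of $C$, once $t$ is sufficiently large. Every other step is a routine application of homotopy invariance and Lemma \ref{lem:2.1b} together with uniform convergence.
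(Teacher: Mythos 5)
Your proposal follows the paper's skeleton: rescale by $\lambda$, transfer the fixed point index from $g=\lambda^{-1}f$ to $g_k=\lambda^{-1}f_k$ via the linear homotopy $(1-t)g+tg_k$ (your Step 4 is precisely the paper's claim (\ref{eq:2.4}), proved by the same compactness argument), and compute the index of $g_k$ to be $1$ via the scaling homotopy $tg_k$ when $r_C(g_k)<1$ (your Step 2; the paper routes the index of the zero map through the commutativity and normalization properties, but it is the same step). The one genuine difference is Step 3: the paper simply cites \cite[Theorem 2.1]{Nu1} for the fact that $i_C(g,V)=0$, whereas you re-derive it from the existence of an eigenvector $u$ with $g(u)=\mu u$, $\mu>1$ (itself quoted from \cite{Nu1}, as in the paper's introduction) together with the translation homotopy $H(t,x)=g(x)+tu$. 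This makes the index-zero computation self-contained modulo eigenvector existence; in effect you reprove the special case of \cite[Theorem 2.1]{Nu1} that the paper uses as a black box.

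However, the bridge inside Step 3 needs repair in the generality of the theorem. From $x\geq t(1+\mu+\cdots+\mu^n)u$ for all $n\geq 1$ you conclude that the norm of a fixed point must blow up, and hence that taking $T$ large enough makes the homotopy admissible. Two problems: first, the theorem concerns an arbitrary closed cone in a Banach space, and no normality assumption is in force, so an inequality $cu\leq x$ with $c$ large gives no lower bound on $\|x\|$ whatsoever; second, even granting such a bound, the ``for $T$ large'' framing only excludes fixed points at the top of the homotopy and does not by itself rule out fixed points on $\partial U_R$ at small $t>0$, which is exactly what admissibility on all of $[0,T]$ requires. The correct deduction from your displayed inequality is both stronger and simpler: for any fixed $t>0$, a fixed point $x$ would satisfy $\bigl(t(1+\mu+\cdots+\mu^n)\bigr)^{-1}x-u\in C$ for every $n$; letting $n\to\infty$ and using that $C$ is closed gives $-u\in C$, hence $u=0$ because $C\cap(-C)=\{0\}$, a contradiction. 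Thus $H(t,\cdot)$ has no fixed point anywhere in $C$ for every $t>0$ (this is the same limiting trick used in the paper's Lemma \ref{lem:2.1b}), the homotopy is admissible for every $T>0$, and $i_C(g,U_R)=i_C(H(T,\cdot),U_R)=0$. With that one-line repair, your proof is complete and correct.
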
 
\begin{proof}
For simplicity write $r=r_C(f)$ and $r_k=r_C(f_k)$. Define $g\colon C\to C$ by 
$g(x)=\lambda^{-1}f(x)$ for $x\in C$. Similarly, let $g_k\colon C\to C$ be given by 
$g_k(x)=\lambda^{-1}f_k(x)$ for all $x\in C$. Clearly 
\[
r_C(g) =\lambda^{-1}r>1\mbox{\quad and \quad}r_C(g_k)=\lambda^{-1}r_k.
\]
Write $\rho = \lambda^{-1}r$ and $\rho_k =\lambda^{-1}r_k$. 
We wish to show that $\liminf_{k\to\infty} \rho_k\geq 1$. 

Let $\Sigma=\{x\in C\colon \|x\|=1\}$ and denote $V =\{x\in C\colon \|x\|<1\}$.  Remark that as  $\lambda\not\in\sigma_C(f)$,   
\begin{equation}\label{eq:2.2}
g(x)\neq x\mbox{\quad for all }x\in \Sigma.
\end{equation}
Furthermore there exists $\delta>0$ such that 
\begin{equation}\label{eq:2.3} 
\|g(x) -x\|>\delta \mbox{\quad for all x}\in \Sigma.
\end{equation} 
Indeed, if $(z_k)_k$ in $\Sigma$  and $\lim_{k\to\infty} \|g(z_k) -z_k\|=0$, then it follows from Lemma \ref{lem:2.1} that $(z_k)_k$ has a convergent subsequence with limit $\xi\in \Sigma$. Continuity of $g$ implies that $g(\xi)=\xi$, which is impossible

We claim that for all $k$ sufficiently large,  
\begin{equation}\label{eq:2.4}
(1-t)g(x) +tg_k(x) \neq x\mbox{\quad for all } x\in \Sigma\mbox{ and } 0\leq t\leq 1.
\end{equation}
For the sake of contradiction suppose that there exist a sequence $(x_k)_k$ of distinct points in $\Sigma$ and a sequence $(t_k)_k$ in $[0,1]$ such that $(1-t_k)g(x_k)+t_kg_k(x_k)=x_k$. 
As $\lim_{k\to\infty} \|g-g_k\|_C=0$, 
\[
\lim_{k\to\infty} d(x_k,\mathrm{cl}(g(\Sigma)))=0. 
\] 
Using the compactness of $g$ we see that $(x_k)_k$ has a convergent subsequence $(x_{k_i})_i$ with limit $\zeta\in \Sigma$ by Lemma \ref{lem:2.1}. The continuity of $g$ now implies that 
\[
g(\zeta)=\lim_{i\to\infty} g(x_{k_i})=\lim_{i\to\infty} (1-t_{k_i})g(x_{k_i})+t_{k_i}g_{k_i}(x_{k_i}) =\zeta, 
\]
which contradicts (\ref{eq:2.2}). 

We now utilize the fixed point index as defined in \cite{Nus}. In particular, if we  apply the homotopy property of the fixed point index we find that  
\begin{equation}\label{eq:2.5}
i_C(g,V) = i_C(g_k,V)\mbox{\quad for all $k$ sufficiently large.}
\end{equation}

It was proved in \cite{Nu1} as a special case of Theorem 2.1 that $i_C(g,V)=0$. Using this fact the proof can be completed as follows. From (\ref{eq:2.5}) we deduce that $i_C(g_k,V)=0$ for all $k$ sufficiently large. 
Now suppose that for some large $k$, $r_C(g_k)\leq 1$. By (\ref{eq:2.4}) there exists no fixed point $x\in \Sigma$ of $g_k$; so, $r_C(g_k)<1$. Consider the homotopy $tg_k(x)$ for $0\leq t\leq 1$. By assumption $tg_k(x)\neq x$ for all $0\leq t\leq 1$ and $x\in\Sigma$. It follows that $i_C(g_k,V) = i_C(h,V)$ where $h(x) =0 $ for all $x\in V$. If $D:=\{0\}$, $h(V)$ is contained in $D$, and $D$ is contained in $V$, so the commutativity property of the fixed point index (see \cite{DG} or \cite{Nus}) implies that $i_C(h,V)=i_D(h,D)$. The normalization property of the fixed point index (see \cite{DG} or \cite{Nus})  implies that $i_D(h,D)$ is the Lefschetz number of the map $h\colon D\to D$, which equals 1. It follows  that $i_C(h,V)=i_D(h,D)=1$, which is a contradiction.
So, $r_C(g_k)>1$ for all $k$ sufficiently large, and hence $\liminf_{k\to\infty} r_C(g_k)\geq 1$. 
\end{proof}
The fact that the maps  $f_k$ in Theorem \ref{thm:2.3} are homogeneous and order-preserving plays a limited role in the proof. Indeed, suppose that $f$, $C$, $V$ and $\lambda$ are as in Theorem \ref{thm:2.3}. For $k\geq 1$ suppose that $f_k\colon \mathrm{cl}(V)\to C$ is continuous and compact, and $\sup\{\|f_k(x)-f(x)\|\colon x\in C \mbox{ and }\|x|=1\}\to 0$ as $k\to\infty$. Then the proof of Theorem \ref{thm:2.3} shows that for all $k$ large, there  exist $x_k\in C$, with $\|x_k\|=1$, and $\lambda_k>0$ such that $f_k(x_k)=\lambda_k x_k$ and $\liminf_{k\to\infty}\lambda_k \geq \lambda$.

A continuous compact order-preserving  homogeneous  map $f\colon C\to C$ on a closed cone $C$ in $X$ is said to have a {\em continuous cone spectral radius} if for each sequence $f_k\colon C\to C$ of continuous compact order-preserving  homogeneous  maps with $\lim_{k\to\infty} \|f-f_k\|_C=0$ we have that 
\[
\lim_{k\to\infty} r_C(f_k) =r_C(f).
\] 
Thus, a combination of Lemma \ref{lem:2.2} and Theorem \ref{thm:2.3} yields the following result. 
 \begin{theorem}\label{thm:2.4} If $f\colon C\to C$ is a continuous compact order-preserving  homogeneous  map and either, $r_C(f)=0$, or, there exist $0<a_1<a_2<a_3<\ldots$ {\bf not} in $\sigma_C(f)$ with $\lim_{k\to\infty} a_k = r_C(f)$, then $f$ has a continuous cone spectral radius. 
 \end{theorem}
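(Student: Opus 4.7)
The plan is simply to combine the upper-semicontinuity result in Lemma \ref{lem:2.2} with the lower-semicontinuity statement in Theorem \ref{thm:2.3}, splitting into the two cases the hypothesis provides.

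First I would dispose of the case $r_C(f)=0$. Let $(f_k)_k$ be any sequence of continuous compact order-preserving homogeneous maps with $\|f-f_k\|_C\to 0$. Lemma \ref{lem:2.2} gives $\limsup_{k\to\infty} r_C(f_k)\leq r_C(f)=0$, and since $r_C(f_k)\geq 0$ by definition, we conclude $\lim_{k\to\infty} r_C(f_k)=0=r_C(f)$.

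Next I would handle the main case, where $0<a_1<a_2<\ldots$ avoid $\sigma_C(f)$ and $a_k\uparrow r_C(f)$. Again let $(f_k)_k$ be as in the definition of continuous cone spectral radius. Lemma \ref{lem:2.2} applied directly yields $\limsup_{k\to\infty} r_C(f_k)\leq r_C(f)$. For the other inequality, fix $n\geq 1$. Since $0<a_n<r_C(f)$ and $a_n\notin\sigma_C(f)$, Theorem \ref{thm:2.3} applies with $\lambda=a_n$ and gives
\[
\liminf_{k\to\infty} r_C(f_k)\geq a_n.
\]
Since the left-hand side does not depend on $n$, letting $n\to\infty$ and using $a_n\to r_C(f)$ yields $\liminf_{k\to\infty} r_C(f_k)\geq r_C(f)$. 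Combining this with the upper-semicontinuity bound forces $\lim_{k\to\infty} r_C(f_k)=r_C(f)$, as required.

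There is no substantive obstacle: the essential analytic content (the compactness argument behind Lemma \ref{lem:2.2} and the fixed-point-index homotopy argument behind Theorem \ref{thm:2.3}) has already been carried out. The only point that deserves a moment of care is the trivial case $r_C(f)=0$, where the hypothesis does not supply any $a_k$; this is handled separately as above. Everything else is a two-line synthesis of the two earlier results.
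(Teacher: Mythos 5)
Your proposal is correct and is exactly the argument the paper intends: the paper itself presents Theorem \ref{thm:2.4} as an immediate combination of Lemma \ref{lem:2.2} (upper semicontinuity) and Theorem \ref{thm:2.3} (applied at each $a_n$, then letting $n\to\infty$), with the $r_C(f)=0$ case handled trivially by nonnegativity. Your write-up simply makes this synthesis explicit, including the correct observation that the strictly increasing sequence satisfies $0<a_n<r_C(f)$, so Theorem \ref{thm:2.3} indeed applies at each $a_n$.
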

In particular we see that every compact linear map $T\colon X\to X$ leaving invariant a closed cone $C$, has a continuous cone spectral radius, as the cone spectrum is totally disconnected. 
 
\section{An example} 
In this section we give an example of a linear map $T\colon C\to C$ on a closed, total cone in a Banach space $X$, which does not have a continuous cone spectral radius. The map $T\colon C\to C$ is compact, but does not have a continuous compact  linear extension as a map from  $X$  to $X$; so, we do not contradict the final remark in the previous section.  
We follow Bonsall \cite[\S 2]{Bon} and define $Y$ to be the Banach space of all continuous functions on $[0,1]$ with $f(0)=0$, and uniform-norm $\|f\|=\max\{|f(t)|\colon   0\leq t\leq 1\}$. Let $K=\{f\in Y\colon f\mbox{ is nonnegative and convex on $[0,1]$}\}$. 
As Bonsall remarks, $K$ is a closed total cone in $Y$, because the subspace $U$ of twice continuously differentiable functions on $[0,1]$ is dense in $Y$ and contained in $K-K$. 

Let $T\colon K\to K$ be defined by 
\[
Tf(t) =f(t/2)\mbox{\quad for all $f\in Y$ and $0\leq t\leq 1$.}
\]
Bonsall \cite[\S 4]{Bon} shows that $T$ is a compact linear map on $K$, but its extension to $Y$ is not compact.  Let us recall the argument that shows that $T$ is compact on $K$. Because $f\in K$ is convex, nonnegative and $f(0)=0$, $f$ is an increasing function on $[0,1]$, so that $\|f\|=f(1)$ for all $f\in K$. Convexity of $f$ implies for $0\leq s<t\leq 1$ that 
\[
0\leq \frac{Tf(t)-Tf(s)}{t/2-s/2} = \frac{f(t/2)-f(s/2)}{t/2-s/2}\leq \frac{f(1)-f(1/2)}{1-1/2}\leq 2f(1).
\]
It  follows that $|Tf(t)-Tf(s)|\leq |t-s|$ for all $s,t\in[0,1]$ and $f\in K$ with 
$\|f\|\leq 1$. This shows that $\{Tf\colon f\in K\mbox{ and }\|f\|\leq 1\}$ is bounded and  equicontinuous, and hence $T$ is compact on $K$. 

To define the approximating maps $T_k\colon K\to K$, we let for $k\geq 3$, 
\[
\phi_k(t) =\left[\begin{array}{ll} t^2 &\mbox{ for }0\leq t\leq (1/2)^k\\
							(1/2)^{2k} +\epsilon_k(t-(1/2)^k) &\mbox{ for } (1/2)^k\leq t\leq 1,\end{array}\right.                                             
\]
where $\epsilon_k = \frac{1}{2}(1-\frac{1}{2^k -1})$. It is easy to verify that $\phi_k(0)=0$, and $\phi_k$ is nonnegative and convex on $[0,1]$. Furthermore $\phi_k(t)\leq t/2$ for $t\in [0,1]$, and $\lim_{k\to\infty} \phi_k(t) =t/2$ uniformly on $[0,1]$. 
For $k\geq 3$ define $T_k\colon K\to K$ by 
\[
T_kf(t) =f(\phi_k(t))\mbox{\quad for }0\leq t\leq 1.
\]
The linear maps $T_k$ have the following properties: 
\begin{enumerate}
\item $T_k$ is continuous and compact on $K$, 
\item $T_k(K)\subseteq K$, and 
\item $\lim_{k\to\infty} \|T-T_k\|_K=0$.
\end{enumerate} 
To show the first assertion we first note that $T_k$ is clearly continuous. Using convexity of $f$ and $\phi_k$ we find for $0\leq s<t\leq 1$ that 
\begin{eqnarray*}
\frac{f(\phi_k(t))-f(\phi_k(s))}{t-s} & = & \Big{(}\frac{f(\phi_k(t))-f(\phi_k(s))}{\phi_k(t)-\phi_k(s)}\Big{)}\Big{(}\frac{\phi_k(t)-\phi_k(s)}{t-s}\Big{)}\\
 & \leq & \Big{(}\frac{f(1)-f(1/2)}{1-1/2}\Big{)}\Big{(}\frac{\phi_k(1)-\phi_k(s)}{1-s}\Big{)}\leq 2\epsilon_k f(1)\leq f(1). 
\end{eqnarray*}
It follows that $|T_kf(t) -T_kf(s)|\leq |t-s|$ for $s,t\in [0,1]$ and $f\in K$ with $\|f\|\leq1$. 
So, $\{T_kf\colon f\in K\mbox{ and }\|f\|\leq 1\}$ is bounded and equicontinuous, which shows that $T_k$ is compact on $K$. 

Note that the second assertion is clear, as each $f\in K$ is convex and increasing on $[0,1]$. To show the third assertion we again use convexity of $f$ to deduce for $0\leq t\leq 1$ that 
\[
\frac{f(t/2)-f(\phi_k(t))}{t/2-\phi_k(t)}\leq \frac{f(1)-f(1/2)}{1-1/2}\leq 2f(1) \leq 2
\]
for all $f\in K$ with $\|f\|\leq 1$, which gives $\lim_{k\to\infty}\|T-T_k\|_K=0$. 

Remark that for $g(t)=t$, then $g\in K$ and $Tg =\frac{1}{2}g$, so that $r_K(T)\geq 1/2$. In fact, Bonsall \cite{Bon} shows that $r_K(T)=1/2$, which is not difficult. 

For integers $m\geq 1$ let $\phi^m_k$ denote the $m$-fold composition of $\phi_k$. Since $\|T_k^m\|_K = T_k^mg(1) =\phi_k^m(1)$, 
\[
r_K(T_k) =\lim_{m\to\infty} \|T_k^m\|_K^{1/m} =\lim_{m\to\infty} \phi_k^m(1)^{1/m}.
\]
As $\phi_k(t)\leq t/2$ for $t\in [0,1]$, we have that $\phi_k^k(1)\leq (1/2)^k$. 
Recall that $\phi_k(t) = t^2$ for all $0\leq t\leq (1/2)^k$. 
So, for $m\geq k$ we have that 
\[
\phi_k^m(1)\leq (1/2)^{2^{m-k}k},
\] 
which implies that 
\[
\lim_{m\to\infty} \|T_k^m\|_K^{1/m}\leq \lim_{m\to\infty} \big{(}(1/2)^{2^{m-k}k}\big{)}^{1/m}=0.
\]
Thus, $\lim_{k\to\infty} r_K(T_k)\neq r_K(T)$. 

Of course this example does not contradict Theorem \ref{thm:2.4}. Simply observe that 
the function $f(t)=t^\alpha\in K$ is an eigenvector of $T$ with eigenvalue $(1/2)^\alpha$ for all $\alpha\geq 1$; so,  $\sigma_K(T)=(0,1/2]$.

\section{Condition $\mathsf{G}$} 
A useful condition in the analysis of the continuity of the cone spectral radius is the so-called condition $\mathsf{G}$, which was introduced in \cite{BNS}. A cone $C$ in $X$ is said to satisfy {\em condition $\mathsf{G}$ at $x\in C$} if for each sequence 
$(x_k)_k$ in $C$ with $\lim_{k\to\infty} x_k=x$ and each $0<\lambda<1$ there exists $m\geq 1$ such that 
\[
\lambda x\leq x_k\mbox{\quad for all }k\geq m.
\]
We say that {\em $C$ satisfies condition $\mathsf{G}$} if condition $\mathsf{G}$ holds at each $x\in C$. 

\begin{theorem}\label{thm:4.1} If $f\colon C\to C$ is a continuous compact homogeneous order-preserving map on a closed  cone $C$ in $X$, and there exists $u\in C$ at which condition $\mathsf{G}$ holds and $f(u)=r_C(f)u$, then $f$ has a continuous cone spectral radius.
\end{theorem}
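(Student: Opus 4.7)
The plan is to bypass the route through Theorem \ref{thm:2.4} (which would require finding a sequence $a_k \uparrow r_C(f)$ outside $\sigma_C(f)$, and that seems awkward to extract from condition $\mathsf{G}$ alone) and instead give a direct argument by combining Lemma \ref{lem:2.2} with a lower-bound argument built from condition $\mathsf{G}$ and Lemma \ref{lem:2.1b}. Write $r=r_C(f)$. If $r=0$ the conclusion is already contained in Theorem \ref{thm:2.4}, so I may assume $r>0$ and, after rescaling (condition $\mathsf{G}$ at $u$ is easily seen to be equivalent to condition $\mathsf{G}$ at any positive multiple of $u$), that $\|u\|=1$; in particular $u\neq 0$.

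By Lemma \ref{lem:2.2} it is enough to prove $\liminf_{k\to\infty} r_C(f_k)\geq r$. Since $\|u\|=1$, the hypothesis $\|f-f_k\|_C\to 0$ yields
\[
\|f_k(u)-ru\|=\|f_k(u)-f(u)\|\leq \|f-f_k\|_C\to 0,
\]
so $f_k(u)\to ru$ in $X$. Fix $\lambda\in(0,1)$ and set $x_k=f_k(u)/r$; then $x_k\to u$. Condition $\mathsf{G}$ at $u$, applied to the sequence $(x_k)_k$ and to the parameter $\lambda$, produces an integer $m$ such that
\[
\lambda u \leq x_k = \frac{1}{r}f_k(u), \qquad \text{i.e.,}\qquad (\lambda r)\,u \leq f_k(u)
\]
for every $k\geq m$. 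Now Lemma \ref{lem:2.1b}, applied to $f_k$ with $a=\lambda r$ and $v=u\in C\setminus\{0\}$, gives $\lambda r\leq r_C(f_k)$ for $k\geq m$. Hence $\liminf_{k\to\infty} r_C(f_k)\geq \lambda r$, and since $\lambda\in(0,1)$ was arbitrary, $\liminf_{k\to\infty} r_C(f_k)\geq r$, completing the proof.

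The key insight — and what I expect to be the only nontrivial point — is recognizing that condition $\mathsf{G}$ is precisely the tool that converts the norm convergence $f_k(u)\to ru$ into the order inequality $(\lambda r)u\leq f_k(u)$, after which Lemma \ref{lem:2.1b} does the rest. There is no real obstacle beyond checking this translation: no fixed-point index is needed, and no exhibiting of spectral values outside $\sigma_C(f)$ is required, because the single eigenvector $u$ carries all the information we need about perturbations of $f$.
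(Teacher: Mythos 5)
Your proposal is correct and follows essentially the same route as the paper's own proof: combine the upper semicontinuity from Lemma \ref{lem:2.2} with condition $\mathsf{G}$ applied to the sequence $f_k(u)\to r_C(f)u$, and then invoke Lemma \ref{lem:2.1b} to get the lower bound $\liminf_k r_C(f_k)\geq r_C(f)$. The only (immaterial) difference is that the paper first applies $f_k$ to the inequality $\mu_k r_C(f)u\leq f_k(u)$ and uses Lemma \ref{lem:2.1b} with $v=f_k(u)$, whereas you apply Lemma \ref{lem:2.1b} directly with $v=u$, which is a slight shortcut.
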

\begin{proof}
Suppose for $k\geq 1$ that $f_k\colon C\to C$ is a continuous compact homogeneous order-preserving map and (\ref{eq:1.1}) holds. Let $u_k=f_k(u)$ for $k\geq 1$ and note that $\lim_{k\to\infty} u_k  =r_C(f) u$ by (\ref{eq:1.1}). As condition $\mathsf{G}$ holds at $u$, there exists a sequence of positive reals $(\mu_k)_k$, with $\mu_k\to 1^-$ and 
\begin{equation}\label{eq:4.1} 
\mu_k r_C(f)u\leq u_k\mbox{\quad for all }k\geq 1.
\end{equation}
So for each $k\geq 1$, 
\begin{equation}\label{eq:4.2} 
\mu_k r_C(f)u_k= \mu_k r_C(f)f_k(u)\leq f_k(u_k).
\end{equation}
Thus, $\mu_kr_C(f) \leq r_C(f_k)$ for all $k\geq 1$ by Lemma \ref{lem:2.1b}. 
Letting $k\to\infty $ we conclude that 
\[
r_C(f)\leq \liminf_{k\to\infty} r_C(f_k).
\] 
It now follows from Lemma \ref{lem:2.2} that $\lim_{k\to\infty} r_C(f_k) =r_C(f)$.
\end{proof}
It turns out that condition $\mathsf{G}$ always holds at points in the interior of a closed cone. To prove this, it is convenient to recall the definition of Thompson's (part) metric \cite{Tho} on cones and some other related notions. Given a closed cone $C$ in $X$ and $x,y\in C$ we say that $y$ {\em dominates} $x$ if there exists $\beta>0$ such that $x\leq\beta y$. This yields an equivalence relation $\sim_C$ on $C$ by $x\sim_C y$ if $y$ dominates $x$, and $x$ dominates $y$. In other words, $x\sim_C y$ if and only if there exist $0<\alpha\leq \beta$ such that $\alpha y\leq x\leq \beta y$. The equivalence classes are called {\em parts} of the cone. For $x\sim_C y$ one can consider the function  
$M(x/y)=\inf \{\beta>0 \colon x\leq \beta y\}$. Using this function {\em Thompson's (part) metric} is defined by 
\[
d_T(x,y) =\log \max\{M(x/y),M(y/x)\}\mbox{\quad  for } x,y\in C\setminus\{0\}\mbox{ with } x\sim_C y,\] 
and $d_T(0,0)=0$. It is known \cite{Tho} that $d_T$ is a metric on each part of a closed cone $C$. Moreover, if $C$ has a non-empty interior, $C^\circ$, and $x,y\in C^\circ$ with $\|x-y\|<r$, then 
\begin{equation}\label{eq:4.3} 
d_T(x,y) \leq \log \max\Big{\{} \frac{ r+\|x-y\|}{r},\frac{r}{r-\|x-y\|}\Big{\}},
\end{equation}
see \cite[p.16]{Nmem1}.
\begin{lemma}\label{lem:4.2} 
If $C$ is a closed cone in $X$ with a non-empty interior, then $C$ satisfies property $\mathsf{G}$ at each $x\in C^\circ$. 
\end{lemma}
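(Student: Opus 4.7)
The plan is to use Thompson's part metric and inequality (4.3) to control the order relation between $x$ and nearby points $x_k$. Fix $x \in C^\circ$, a sequence $(x_k)_k$ in $C$ with $x_k \to x$, and $0 < \lambda < 1$. Choose $r > 0$ such that the open ball $B(x, r)$ is contained in $C$. The goal is to show $\lambda x \leq x_k$ for all sufficiently large $k$.

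First I would verify that $x_k \in C^\circ$ for all $k$ large enough. Once $\|x_k - x\| < r$, the ball $B(x_k, r - \|x_k - x\|)$ is contained in $B(x, r) \subseteq C$ by the triangle inequality, so $x_k \in C^\circ$. In particular, both $x$ and $x_k$ are interior points of $C$, so they dominate each other (being in the same part of $C$) and Thompson's metric $d_T(x, x_k)$ is defined.

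Next I would apply inequality (4.3) at the point $x$, with this fixed $r$: for $k$ large, $\|x - x_k\| < r$, and
\[
d_T(x, x_k) \leq \log \max\Big\{\frac{r + \|x - x_k\|}{r}, \frac{r}{r - \|x - x_k\|}\Big\} \longrightarrow 0
\]
as $k \to \infty$. Since $d_T(x, x_k) = \log \max\{M(x/x_k), M(x_k/x)\}$, it follows that $M(x/x_k) \to 1$.

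Because $1/\lambda > 1$, there exists $m$ such that $M(x/x_k) < 1/\lambda$ for all $k \geq m$. By the definition of $M$, there is then some $\beta < 1/\lambda$ with $x \leq \beta x_k$; since $x_k \in C$ and $1/\lambda - \beta > 0$, adding $(1/\lambda - \beta)x_k \in C$ gives $x \leq (1/\lambda)x_k$, i.e., $\lambda x \leq x_k$, which is exactly condition $\mathsf{G}$ at $x$. There is no serious obstacle: the entire content of the argument is the availability of the Thompson metric estimate (4.3) on the interior, together with the elementary observation that an interior point remains interior under small norm-perturbations so that the estimate applies.
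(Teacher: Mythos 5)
Your proof is correct and follows essentially the same route as the paper: apply the Thompson metric estimate (4.3) to conclude $d_T(x,x_k)<\log(1/\lambda)$ for large $k$, hence $M(x/x_k)\leq 1/\lambda$ and $\lambda x\leq x_k$. The only difference is that you explicitly verify the $x_k$ are eventually interior (the paper tacitly takes the sequence in $C^\circ$), which is a welcome but minor extra step.
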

\begin{proof}
Let $(x_k)_k$ be sequence in $C^\circ$ with $\lim_{k\to\infty}\|x_k -x\|=0$ and $0<\lambda<1$. By (\ref{eq:4.3}) there exists $m\geq 1$ such that 
\[
d_T(x_k,x) <\log 1/\lambda\mbox{\quad for all }k\geq m.
\]
This implies that $M(x/x_k)\leq 1/\lambda$, so that $x\leq x_k/\lambda$ for all $k\geq m$. Thus, $\lambda x\leq x_k$ for all $k\geq m$. 
\end{proof}
In the sequel we shall also need the following basic result from \cite{NuLAA}. 
\begin{lemma}\label{lem:4.3} Suppose that $f\colon C\to C$ is an order-preserving homogeneous map  on a closed cone $C$ in $X$ and $x,y\in C$ are such that $y$ dominates $x$ and $x\neq 0$. If $\lambda x\leq f(x)$ and $f(y)\leq \mu y$, then 
$\lambda\leq \mu$. 
\end{lemma}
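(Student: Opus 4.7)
The plan is to iterate both inequalities and then exploit the domination relation and the fact that $C$ is a closed cone.

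First I would prove by induction that $\lambda^k x \leq f^k(x)$ and $f^k(y) \leq \mu^k y$ for every $k\geq 1$. The base cases are given. For the inductive step on $x$, applying $f$ to $\lambda^k x \leq f^k(x)$ (which is order-preserving) gives $\lambda^k f(x) \leq f^{k+1}(x)$ by homogeneity, and combining with $\lambda x \leq f(x)$ multiplied through by $\lambda^k$ yields $\lambda^{k+1} x \leq f^{k+1}(x)$. The induction for $y$ is analogous.

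Next, since $y$ dominates $x$, pick $\beta>0$ with $x\leq \beta y$. Order-preservation and homogeneity give $f^k(x)\leq f^k(\beta y)=\beta f^k(y)\leq \beta\mu^k y$. Chaining the inequalities, I obtain
\[
\lambda^k x \leq \beta\mu^k y \qquad \text{for all } k\geq 1,
\]
or equivalently $\beta\mu^k y - \lambda^k x \in C$.

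Now I would argue by contradiction. Suppose $\lambda>\mu\geq 0$. Dividing the containment $\beta\mu^k y - \lambda^k x \in C$ by the positive number $\lambda^k$ (which preserves $C$ since $C$ is a cone) yields
\[
\beta (\mu/\lambda)^k y - x \in C.
\]
Because $\mu/\lambda<1$, the left-hand side converges in norm to $-x$ as $k\to\infty$. Since $C$ is closed, $-x\in C$, and combined with $x\in C$ this forces $x\in C\cap(-C)=\{0\}$, contradicting $x\neq 0$. The only delicate point is the edge case $\mu=0$, which reduces to showing that $f^k(y)=0$ forces $\lambda^k x=0$; since $x\neq 0$, this gives $\lambda=0=\mu$, consistent with the conclusion. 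The main (and only real) obstacle is keeping the scalar manipulations honest when $\mu=0$, but this is handled cleanly since $\beta\mu^k y = 0$ still yields $\lambda^k x\leq 0$ and hence $\lambda=0$.
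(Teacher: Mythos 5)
Your proof is correct and follows essentially the same route as the paper: iterate the inequalities to get $\lambda^k x \leq f^k(x) \leq \beta f^k(y) \leq \beta\mu^k y$, divide by $\lambda^k$, and let $k\to\infty$ to force $-x\in C$, contradicting $x\neq 0$. The only cosmetic differences are the normalization by $\beta$ and the (unnecessary) separate treatment of $\mu=0$, which your limiting argument already covers.
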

\begin{proof}
The assertion is obvious if $\lambda =0$. Suppose $\lambda>0$ and note that, as $y$ dominates $x$, there exists $\beta >0$ such that $x\leq \beta y$. This implies that $\lambda x\leq f(x)\leq\beta f(y)\leq \beta\mu y$. It follows that $\lambda^k x\leq f^k(x)\leq \beta f^k(y)\leq \beta \mu^k y$, so that 
\[
\Big{(}\frac{\mu}{\lambda}\Big{)}^k y - \frac{1}{\beta}x\in C\mbox{\quad  for all }k\geq 1.
\]
By letting $k\to \infty$ we find that $-x/\beta\in C$, if $\mu<\lambda$. This is impossible, as $x\neq 0$, and hence $\lambda\leq \mu$.
\end{proof}
Remark that if $x\in C$ and $y\in C^\circ$, then $y$ dominates $x$, as $y-\delta x\in C$ for all $\delta >0$ sufficiently small. Thus, it follows from Lemma \ref{lem:4.3} that if $f\colon C\to C$ is a continuous compact order-preserving homogeneous map on a closed cone $C\subseteq X$ with non-empty interior and $v\in C^\circ$ is an eigenvector of $f$ with eigenvalue $\lambda$, then $\lambda =r_C(f)$. Combining this fact with Theorem \ref{thm:4.1} and Lemma \ref{lem:4.2} yields the following result. 
\begin{corollary}\label{cor:4.4} 
If $f\colon C\to C$ is a continuous compact order-preserving homogeneous map on a closed cone $C$ with non-empty interior, and $f$ has an eigenvector in $C^\circ$, then $f$ has a continuous cone spectral radius. 
\end{corollary}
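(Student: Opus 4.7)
The plan is to assemble the corollary directly from the three tools already developed in this section. Let $u \in C^\circ$ be the given eigenvector and write $f(u) = \lambda u$ for some $\lambda \geq 0$.

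First, I would identify $\lambda$ with $r_C(f)$ using the remark that immediately precedes the corollary. That remark is itself an easy consequence of Lemma \ref{lem:4.3}: since any interior point $y\in C^\circ$ dominates every $x\in C$ (as $y-\delta x\in C$ for small $\delta>0$), the interior eigenvector $u$ dominates the Krein--Rutman-type eigenvector $w\in C$ with $f(w)=r_C(f)w$ (whose existence on a closed cone is recorded in the introduction), and Lemma \ref{lem:4.3} applied with $x=w$, $y=u$ yields $r_C(f)\leq \lambda$. The reverse inequality $\lambda\leq r_C(f)$ is Lemma \ref{lem:2.1b}. Hence $\lambda = r_C(f)$.

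Next, I would invoke Lemma \ref{lem:4.2}, which states that condition $\mathsf{G}$ holds automatically at every point of $C^\circ$. In particular it holds at our eigenvector $u$.

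Finally, since $u$ is a point at which condition $\mathsf{G}$ holds and $f(u)=r_C(f)u$, the hypothesis of Theorem \ref{thm:4.1} is satisfied, and the conclusion of that theorem delivers the continuity of the cone spectral radius of $f$. There is no real obstacle: the corollary is essentially a three-line concatenation of Lemma \ref{lem:4.3} (via the preceding remark), Lemma \ref{lem:4.2}, and Theorem \ref{thm:4.1}.
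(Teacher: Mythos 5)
Your proposal is correct and takes essentially the same route as the paper: the paper derives the corollary by exactly this concatenation---the remark preceding the corollary (an interior eigenvector has eigenvalue $r_C(f)$, via Lemma \ref{lem:4.3} and Lemma \ref{lem:2.1b}), then Lemma \ref{lem:4.2} (condition $\mathsf{G}$ holds on $C^\circ$), then Theorem \ref{thm:4.1}. The only point you leave implicit (as does the paper) is the degenerate case $r_C(f)=0$, where the eigenvector $w$ with eigenvalue $r_C(f)$ need not exist; but there Lemma \ref{lem:2.1b} alone forces $\lambda=0=r_C(f)$, so Theorem \ref{thm:4.1} still applies.
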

It was shown \cite{BNS} that if $C$ is a closed cone in a finite dimensional vector space $X$, then $C$ satisfies condition $\mathsf{G}$ if and only if $C$ is polyhedral. Recall that a closed cone $C\subseteq X$ is a {\em polyhedral} cone, if it is intersection of finitely may closed  half-space, i.e., there exists $\phi_1,\ldots,\phi_m\in X^*$ such that 
\[
C = \{x\in X\colon \phi_i(x)\geq 0\mbox{ for all } 1\leq i\leq m\}.
\]
Thus, we have the following consequence of Theorem \ref{thm:4.1}. 
\begin{corollary}\label{cor:4.5} 
If $f\colon C\to C$ is a continuous order-preserving  homogeneous map on a polyhedral cone $C$, then $f$ has a continuous spectral radius.  
\end{corollary}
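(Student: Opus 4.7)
The plan is to recognize this corollary as a direct combination of three facts that have already been assembled: automatic compactness of $f$ in finite dimensions, the existence of an eigenvector at $r_C(f)$, and the fact that polyhedral cones satisfy condition $\mathsf{G}$ at every point. First I would observe that because $C$ is polyhedral, $C$ is contained in a finite-dimensional subspace of $X$. Continuity of $f$ on the compact set $\Sigma=\{x\in C\colon\|x\|=1\}$ yields $M:=\sup_{x\in\Sigma}\|f(x)\|<\infty$, and then homogeneity gives $\|f(x)\|\leq M\|x\|$ for all $x\in C$, so $f$ sends bounded sets in $C$ to bounded sets. Since bounded sets in the ambient finite-dimensional space have compact closure, $f$ is automatically compact on $C$. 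Thus the hypotheses of Theorem \ref{thm:4.1} and of Corollary \ref{cor:4.4}'s underlying machinery are in force even though compactness was not listed.

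Next I would dispose of the trivial case. If $r_C(f)=0$, then Theorem \ref{thm:2.4} immediately yields that $f$ has a continuous cone spectral radius. So assume henceforth that $r_C(f)>0$. By the consequence of \cite[Theorem 2.1]{Nu1} discussed in the introduction (applied to $(1/t)f$ for $0<t<r_C(f)$, together with a limiting argument), there exists $u\in C$ with $\|u\|=1$ and
\[
f(u)=r_C(f)\,u.
\]

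The final step is to invoke the cited result of \cite{BNS}: since $C$ is a polyhedral cone in a finite-dimensional space, $C$ satisfies condition $\mathsf{G}$, and in particular at the eigenvector $u$ just produced. Theorem \ref{thm:4.1} therefore applies verbatim to $f$ at $u$ and delivers continuity of the cone spectral radius. I do not anticipate any real obstacle; the only point requiring care is the verification in the first paragraph that compactness of $f$ is automatic in the polyhedral (hence finite-dimensional) setting, so that the compactness hypotheses of Theorem \ref{thm:4.1} and of the eigenvector existence result are indeed available. Everything else is a direct appeal to the earlier framework.
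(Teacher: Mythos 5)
Your proposal is correct and follows essentially the same route as the paper: the paper derives Corollary \ref{cor:4.5} directly from Theorem \ref{thm:4.1} together with the cited result of \cite{BNS} that a finite-dimensional closed cone satisfies condition $\mathsf{G}$ if and only if it is polyhedral. The details you spell out---automatic compactness of $f$ in the finite-dimensional setting, the eigenvector $u$ with $f(u)=r_C(f)u$ from \cite[Theorem 2.1]{Nu1}, and dispatching the case $r_C(f)=0$ via Theorem \ref{thm:2.4}---are exactly the steps the paper leaves implicit.
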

In the next section we will derive the same result by analyzing the cone spectrum of maps on polyhedral cones and applying Theorem \ref{thm:2.4}. 

\section{The cone spectrum for finite dimensional cones} 
Regarding Theorem \ref{thm:2.4} it is interesting to further analyze the following question:
If $f\colon C\to C$ is a continuous order-preserving homogeneous map on a finite dimensional closed cone $C$ and $r_C(f)>0$, when do there exist $0<a_1<a_2<
a_3<\ldots$ {\bf not} in $\sigma_C(f)$ such that  $\lim_{k\to\infty} a_k = r_C(f)$?
The next example shows that even in finite dimensional spaces such a sequence $(a_k)_k$  may not exist.

\begin{example} Consider the cone of real positive semi-definite $n\times n$ matrices, $\mathrm{Pos}_n(\mathbb{R})$,  inside the vector space of symmetric $n\times n$  matrices. Let $n\geq 2$ and let $A$ be the $n\times n$ diagonal matrix with diagonal  $(1,0,\ldots,0)$. Define $f\colon \mathrm{Pos}_n(\mathbb{R})\to \mathrm{Pos}_n(\mathbb{R})$ by 
\[
f(X) = \Big{(} \mathrm{tr}(XA)X\Big{)}^{1/2}\mbox{\quad for }X\in \mathrm{Pos}_n(\mathbb{R}).
\]
Clearly, if $X\leq Y$, then
$\mathrm{tr}(XA) = \mathrm{tr}(A^TXA)\leq\mathrm{tr}(A^TYA)=\mathrm{tr}(YA)$. It follows from L\"owner's theory \cite{Lo} of order-preserving  maps on $\mathrm{Pos}_n(\mathbb{R})$ that $M\mapsto M^{1/2}$ is order-preserving. Thus, $f$ is a continuous order-preserving  homogeneous map. 

Let $0<\alpha \leq 1$ and let $I$ denote the $(n-2)\times (n-2)$ identity matrix. Consider the $n\times n$ matrix $X_\alpha$ in the interior of  $\mathrm{Pos}_n(\mathbb{R})$,  
\[
X_{\alpha} = \left (\begin{array}{cc|c} 1 &0 & 0\\ 0 & \alpha & 0 \\ \hline 0& 0 & I\\
\end{array} \right). 
\]
Obviously, $\mathrm{tr}(X_{\alpha} A) = 1$, and 
\[
f(X_{\alpha}) =  \left (\begin{array}{cc|c} 1 & 0 & 0\\ 0 & \alpha& 0\\
\hline0 & 0 & I\\  \end{array} \right)^{1/2} =  \left (\begin{array}{cc|c} 1& 0& 0\\ 0 & \sqrt{\alpha} & 0 \\ \hline 0& 0&  I\\ \end{array} \right). 
\]
So, if $f(X_{\alpha}) =\lambda X_{\alpha}$, then $\lambda=1$ and $\sqrt{\alpha}= \alpha$, which is equivalent to $\alpha =1$. Thus, $X_1$ is an eigenvector of $f$ with eigenvalue $1$ in the interior of $\mathrm{Pos}_n(\mathbb{R})$. It follows from Lemma \ref{lem:4.3} that  the cone spectral radius of $f$ is equal to $1$. 

For $0\leq \theta\leq 2\pi$ consider the matrix $Z_\theta$ in the boundary of $\mathrm{Pos}_n(\mathbb{R})$ given by 
\begin{eqnarray*}
Z_\theta & =  & \left (\begin{array}{cc|c} \cos\theta & -\sin\theta & 0 \\ \sin\theta & \cos \theta & 0 \\
\hline 0 & 0 & 0\\ \end{array} \right) \left (\begin{array}{cc|c} 1 & 0 & 0\\ 0 & 0 & 0 \\ \hline 0 & 0& 0\\ \end{array}\right)\left (\begin{array}{cc|c} \cos\theta& \sin\theta & 0\\ -\sin\theta & \cos\theta & 0\\ \hline 0 & 0 & 0\\ \end{array} \right) \\ 
& = & \left (\begin{array}{cc|c} \cos^2\theta & \cos\theta\sin\theta & 0 \\ 
\cos\theta\sin\theta & \sin^2\theta & 0 \\ \hline 0 & 0 & 0\\ \end{array} \right). 
\end{eqnarray*}
Note that 
\[
f(Z_\theta)= \Big{(}\mathrm{tr}(Z_\theta A)Z_\theta \Big{)}^{1/2} = |\cos\theta| Z_\theta^{1/2} = |\cos\theta)| Z_\theta.
\]
So, $Z_\theta$ is an eigenvector with eigenvalue $|\cos\theta|$, and hence  the cone spectrum of $f$ is equal to $[0,1]$. 

Note, however, that as $f$ has an eigenvector in the interior we can apply, instead of Theorem \ref{thm:2.4}, Corollary \ref{cor:4.4} to deduce that $f$ has a continuous cone spectral radius. For $n\geq 3$ the example can  easily be modified so as not to have an eigenvector in the interior, but still have  $[0,1]$ as its cone spectrum. Indeed, let 
$B$ be the $n\times n$ diagonal matrix with diagonal $(1,1,0,\ldots,0)$, and consider $g\colon \mathrm{Pos}_n(\mathbb{R})\to\mathrm{Pos}_n(\mathbb{R})$ given by, 
\[
g(X) = B^T(\mathrm{tr}(XA)X)^{1/2}B\mbox{\quad for }X\in \mathrm{Pos}_n(\mathbb{R}).
\]
The reader can verify that the cone spectrum of $g$ is $[0,1]$ and $g$ has no eigenvectors in the interior of $\mathrm{Pos}_n(\mathbb{R})$. It is, however, unclear if $g$ has a continuous cone spectral radius. 
\end{example}
 
In view of the results so far it interesting to further study the following problem. 
\begin{problem}
Which finite dimensional closed cones $C$ admit  a continuous order-preserving homogeneous map $f\colon C\to C$ with a continuum in its 
cone spectrum? 
\end{problem}
In the remainder we will present some partial results for this problem. We will first recall some basic concepts. 

Given a closed cone $C\subseteq X$ the {\em dual cone} $C^*\subseteq X^*$ is given by $C^*=\{\phi\in X^*\colon \phi(x)\geq 0\mbox{ for all }x\in C\}$. In general $C^*$ need not be a cone, but it is easy to prove that $C^*$ is a cone if $C$ is total. Furthermore, for $x,y\in C$ it follows from the Hahn-Banach separation theorem that 
\begin{equation}\label{HB}
\mbox{$x\leq y$ if and only if $\phi(x)\leq \phi(y)$ for all $\phi\in C^*$.}
\end{equation}
A {\em face} of a closed cone $C\subseteq X$ is a non-empty convex subset $F$ of $C$ such that whenever $x,y\in C$ and $(1-\lambda)x+\lambda y\in F$ for some $0<\lambda <1$ it follows that $x,y\in F$. Note that $C$ and $\{0\}$ are both faces of $C$.  
It is known \cite{lins} that if $C$ is a closed cone in $X$, then the parts of $C$ are precisely the relative interiors of the faces of $C$. 
 
A face $F$ of a polyhedral cone $C$ is called a {\em facet} if $\dim F = \dim C-1$. It is a basic result from polyhedral geometry, see e.g., \cite{Schrijver}, that if $C$ is a polyhedral cone in $X$ with $N$ facets, then there exist $N$ linear functionals $\psi_1,\ldots,\psi_N\in X^*$ such that 
\[
C =\{x\in X\colon \psi_i(x)\geq 0\mbox{ for }i=1,\ldots, N\}\cap \mathrm{span}\, C,
\]
and each linear functional $\psi_i$ corresponds to a unique facet $F_i$ of $C$, in the sense that $F_i = \{x\in C\colon \psi_i(x)= 0\}$. 

Given a closed cone $C$ in $X$ we let $\mathcal{P}(C)$ denote the set of parts of $C$. Note that if $C$ is a polyhedral cone, then $\mathcal{P}(C)$ is finite. On $\mathcal{P}(C)$ we  have a partial ordering $\unlhd$ given by $P\unlhd Q$ if there exist $y\in Q$ and $x\in P$ such that $y$ dominates $x$. Given a polyhedral cone $C$  with facet defining functionals $\psi_1,\ldots,\psi_N$ and $P\in \mathcal{P}(C)$,  we let $I(P)=\{i\colon \psi_i(x)>0\mbox{ for some }x\in P\}$. The next lemma is elementary. 
 \begin{lemma}\label{lem:5.3}
If $C\subseteq V$ is a polyhedral cone with $N$ facets, then 
\begin{enumerate}[(i)]
\item for $P\in\mathcal{P}(C)$ we have $P=\{x\in C\colon \psi_i(x)>0\mbox{ if and only if }i\in I(P)\}$, 
and  
\item $P\unlhd Q$ if and only if $I(P)\subseteq I(Q)$. 
\end{enumerate}
\end{lemma}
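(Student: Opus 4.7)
My plan for Lemma 5.3 is to reduce everything to the standard facial description of polyhedral cones: the faces of $C$ are cut out by turning subsets of the defining inequalities $\psi_i(x)\ge 0$ into equalities, and the relative interior of a face is the subset on which the remaining inequalities are strict. Once part (i) is in place, part (ii) becomes a straightforward bookkeeping argument about when a linear combination $\beta y - x$ satisfies all the $\psi_i\ge 0$.

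For part (i), I would fix $P\in\mathcal{P}(C)$ and write $P=\mathrm{ri}(F)$ for the corresponding face $F$ of $C$ (using the fact, quoted above, that parts equal relative interiors of faces). Define $J(F)=\{i:\psi_i\equiv 0\text{ on }F\}$. Standard polyhedral geometry gives
\[
F=\{x\in C:\psi_i(x)=0\text{ for all }i\in J(F)\},
\]
and the relative interior of this polyhedron equals
\[
\mathrm{ri}(F)=\{x\in F:\psi_j(x)>0\text{ for all }j\notin J(F)\}.
\]
Hence $I(P)=\{1,\ldots,N\}\setminus J(F)$, and (i) follows: $x\in P$ precisely when $\psi_i(x)>0$ for $i\in I(P)$ and $\psi_i(x)=0$ for $i\notin I(P)$.

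For (ii), the forward implication is quick. Assume $P\unlhd Q$, choose $y\in Q$ and $x\in P$ with $x\leq\beta y$ for some $\beta>0$; then $\psi_i(\beta y-x)\ge 0$, so $\beta\psi_i(y)\ge\psi_i(x)$ for every $i$. If $i\in I(P)$, part (i) gives $\psi_i(x)>0$, hence $\psi_i(y)>0$, i.e.\ $i\in I(Q)$. For the converse, suppose $I(P)\subseteq I(Q)$, fix any $x\in P$ and $y\in Q$, and set
\[
\beta=\max_{i\in I(P)}\frac{\psi_i(x)}{\psi_i(y)}
\]
(well-defined because $I(P)\subseteq I(Q)$ forces $\psi_i(y)>0$ for $i\in I(P)$; take $\beta=1$ if $I(P)=\emptyset$, which corresponds to $x=0$). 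Then $\beta\psi_i(y)\ge\psi_i(x)$ for $i\in I(P)$, and trivially $\beta\psi_i(y)\ge 0=\psi_i(x)$ for $i\notin I(P)$, by part (i). Since $\beta y-x\in\mathrm{span}\,C$ and satisfies every defining inequality of $C$, we conclude $\beta y-x\in C$, i.e.\ $x\leq\beta y$, so $y$ dominates $x$ and $P\unlhd Q$.

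I do not anticipate a serious obstacle; the whole lemma is a translation between the facial combinatorics of $C$ and the inequalities $\psi_i$. The one point requiring care is the formula $\mathrm{ri}(F)=\{x\in F:\psi_j(x)>0\text{ for }j\notin J(F)\}$, which one should justify either by appealing to the standard description of relative interiors of polyhedra or by a brief direct argument: any $x\in F$ with some $\psi_{j_0}(x)=0$ and $j_0\notin J(F)$ lies in the proper subface $F\cap\ker\psi_{j_0}$ and hence cannot be in $\mathrm{ri}(F)$, while conversely a convex combination of a point with all $\psi_j(x)>0$ ($j\notin J(F)$) and any other point of $F$ still has strictly positive $\psi_j$-values in a neighborhood, placing it in $\mathrm{ri}(F)$. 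Everything else is direct from the definitions.
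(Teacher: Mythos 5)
Your argument is correct, but it takes a genuinely different route to part (i) than the paper does. The paper never invokes the face structure of $C$ at all: treating parts purely as equivalence classes of the domination relation, it writes $I_x=\{i\colon \psi_i(x)>0\}$ for $x\in C$ and proves directly that $I_x\subseteq I_y$ if and only if $y$ dominates $x$ --- the ``only if'' because $x\leq \beta y$ forces $\psi_i(x)\leq\beta\psi_i(y)$ for all $i$, the ``if'' by exactly the scaling $\beta=\max_{i\in I_x}\psi_i(x)/\psi_i(y)$ that you use in your part (ii). Consequently $x\sim_C y$ iff $I_x=I_y$, so $I_x$ is constant on each part $P$ and equals $I(P)$; that is (i), and (ii) drops out of the same equivalence. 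You instead obtain (i) from the identification of parts with relative interiors of faces (quoted from \cite{lins}) together with two standard polyhedral facts, $F=\{x\in C\colon \psi_i(x)=0\mbox{ for }i\in J(F)\}$ and $\mathrm{ri}(F)=\{x\in F\colon \psi_j(x)>0\mbox{ for }j\notin J(F)\}$. This is legitimate, but it imports precisely the machinery you yourself flag as the delicate point (and your sketch of the converse inclusion is slightly loose: convex combinations alone do not certify membership in $\mathrm{ri}(F)$; one must extend segments beyond the point, or cite the implicit-equality description of relative interiors of polyhedra). It is also logically redundant: the scaling computation in your (ii) already yields (i) with no face theory, which is exactly the economy of the paper's proof, where the one computation does double duty for both assertions. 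On the other side of the ledger, you are more careful than the paper on two small points: you note that $\beta y-x\in \mathrm{span}\, C$ is needed before concluding $\beta y-x\in C$ from the inequalities $\psi_i(\beta y-x)\geq 0$ (the paper attributes this step to (\ref{HB}), which by itself only gives the equivalence over all of $C^*$, not over the finitely many facet functionals), and you treat the degenerate part $P=\{0\}$, i.e.\ $I(P)=\emptyset$, explicitly.
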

\begin{proof}
Let $\psi_1,\ldots,\psi_N\in C^*$ be the facet defining functionals of $C$. By (\ref{HB})  we know that $x\leq y$ is equivalent to $\psi_i(x)\leq\psi_i(y)$ for all $i$. 
For $x\in C$ write $I_x=\{i\colon \psi_i(x)>0\}$, and note that $I_x\subseteq I_y$ if and only if $y$ dominates $x$. Therefore $x\sim_C y$ is equivalent to $I_x=I_y$. 
It follows that $x\in P$ if and only if $I_x=I(P)$, which proves the first assertion. 

If $P\unlhd Q$, there exist $x\in P$ and $y\in Q$ such that $y$ dominates $x$. So, $I(P)=I_x\subseteq I_y=I(Q)$. On the other hand, if $I(P)\subseteq I(Q)$, then for each $x\in P$ and each $y\in Q$ we know that $I_x\subseteq I_y$,  and hence there exists $\beta>0$ such that $\psi_i(x)\leq \beta \psi_i(y)$ for all $i$. It follows from  (\ref{HB})  that $y$ dominates $x$, and hence $P\unlhd Q$. 
\end{proof} 

The following lemma is a direct consequence of Lemma \ref{lem:4.3}. 
\begin{corollary}\label{cor:5.4} 
If $f\colon C\to C$ is an order-preserving homogeneous map on a closed cone 
$C\subseteq V$, and $f$ has eigenvectors $x$ and $y$ in $C$ with $x\sim_C y$, then the eigenvalues of $x$ and $y$ are equal. 
\end{corollary}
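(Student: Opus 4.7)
The plan is to apply Lemma \ref{lem:4.3} in both directions permitted by the relation $x\sim_C y$.

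First, I would record the setup. Since $x$ and $y$ are eigenvectors, by the definition of $\sigma_C(f)$ they are nonzero elements of $C$, say $f(x)=\lambda x$ and $f(y)=\mu y$ for some $\lambda,\mu\geq 0$. The assumption $x\sim_C y$ means that $y$ dominates $x$ \emph{and} $x$ dominates $y$.

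Next, I would invoke Lemma \ref{lem:4.3} twice. For the first inequality: the identity $f(x)=\lambda x$ gives $\lambda x\leq f(x)$, and $f(y)=\mu y$ gives $f(y)\leq \mu y$; since $y$ dominates $x$ and $x\neq 0$, Lemma \ref{lem:4.3} yields $\lambda\leq \mu$. For the reverse inequality I would simply interchange the roles of $x$ and $y$: $\mu y\leq f(y)$, $f(x)\leq \lambda x$, $x$ dominates $y$, and $y\neq 0$, giving $\mu\leq \lambda$. Combining, $\lambda=\mu$.

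There is no real obstacle here: the corollary is essentially a symmetrization of Lemma \ref{lem:4.3}. The only point requiring a brief comment is that eigenvectors are nonzero (needed to invoke the hypothesis $x\neq 0$ in Lemma \ref{lem:4.3}), which is built into the definition of $\sigma_C(f)$ given earlier in the paper.
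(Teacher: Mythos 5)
Your proof is correct and is exactly the argument the paper intends: the paper states the corollary as a ``direct consequence of Lemma \ref{lem:4.3}'', namely the symmetric double application you give (using that $x\sim_C y$ provides domination in both directions, so Lemma \ref{lem:4.3} yields $\lambda\leq\mu$ and $\mu\leq\lambda$). Your remark that eigenvectors are nonzero by the definition of $\sigma_C(f)$ is the right point to check and is handled correctly.
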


As $\{0\}$ is a part of any cone $C$, we see that the number of distinct eigenvalues of an order-preserving homogeneous map $f\colon C\to C$ is bounded by $m-1$, where 
$m$ is the number of parts of $C$. In case $C=\mathbb{R}^n_+$ we can exploit the lattice structure on $\mathbb{R}^n_+$  to construct an example that shows that the upper bound, $2^n-1$, is sharp. Recall that if $x,y\in \mathbb{R}^n_+$, then  $x\vee y\in\mathbb{R}^n_+$ is given by $(x\vee y)_i=\max\{x_i,y_i\}$ for all $i$. 
For  $I\subseteq \{1,\ldots,n\}$ non-empty, define $\chi^I\in\mathbb{R}^n_+$ by $\chi^I_i=1$ if $i\in I$, and $\chi^I_i=0$ otherwise. Furthermore, let 
$\lambda_I>0$ be such that $\lambda_I<\lambda_J$ if $I\subseteq J$ and $I\neq J$. 
Consider the map $f\colon\mathbb{R}^n_+\to\mathbb{R}^n_+$ given by, 
 \[
 f(x)=\bigvee_{\emptyset \neq I\subseteq \{1,\ldots,n\}} \lambda_I (\min_{i\in I} x_i)\chi^I\mbox{\quad\quad  for $x\in\mathbb{R}^n_+$. }
 \]
Obviously, $f$ is a continuous order-preserving homogeneous map on $\mathbb{R}^n_+$, and for $J\subseteq \{1,\ldots,n\}$ with $J\neq \emptyset$ we have that 
 \[
 f(\chi^J)=\bigvee_{\emptyset \neq I\subseteq J} \lambda_I \chi^I = \lambda_J\chi^J,
 \]
 as $\lambda_I<\lambda_J$ for $I\subseteq J$ with $I\neq J$. So, $f$ has $2^n-1$ distinct eigenvalues. 
 
 We will now show that the upper bound, $m-1$, is sharp for every polyhedral cone. 
To establish this result  we need to introduce some more notation. Given a polyhedral cone  $C\subseteq V$  with non-empty interior and facet defining functionals $\psi_1,\ldots,\psi_N$, we define for $r<0$ and $I\subseteq \{1,\ldots,N\}$ non-empty, the function $M_r(I)\colon C^\circ \to [0,\infty)$ by 
 \[
 M_r(I)(x)=\Big{(}\sum_{i\in I} \psi_i(x)^r\Big{)}^{1/r}\mbox{\quad\quad for }x\in C^\circ.
 \]
 From \cite{BNS} we know that $M_r(I)$ has a continuous, order-preserving, homogeneous extension to $\partial C$, as $C$ is polyhedral. Note that 
 if $x\in\partial C$ and there exists $i\in I$ with $\psi_i(x)=0$, then $M_r(I)(x)=0$, since $r<0$. For $r=-\infty$ we define $M_{-\infty}(I)\colon C\to [0,\infty)$ by 
 \[
 M_{-\infty}(I)(x)=\min_{i\in I} \psi_i(x)\mbox{\quad\quad for }x\in C.
 \]
\begin{theorem}\label{thm:5.5} 
Let $C\subseteq V$ be a polyhedral cone with non-empty interior and $m$ faces. If $f\colon C\to C$ is a order-preserving homogeneous  map, then $|\sigma_C(f)|\leq m-1$. Moreover, on $C$ there exists a continuous order-preserving homogeneous  map with $m-1$ distinct eigenvalues.  
\end{theorem}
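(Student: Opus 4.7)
The proof splits into an upper bound and a sharpness construction. For the upper bound $|\sigma_C(f)|\leq m-1$, suppose $f$ has distinct eigenvalues $\mu_1,\ldots,\mu_k$ with eigenvectors $x_1,\ldots,x_k\in C\setminus\{0\}$; by Corollary~\ref{cor:5.4}, eigenvectors lying in the same part share an eigenvalue, so the $x_i$'s occupy $k$ distinct parts of $C$. Since the parts are exactly the relative interiors of the $m$ faces (as recalled just before Lemma~\ref{lem:5.3}) and the part $\{0\}$ contains no eigenvector, $k\leq m-1$.

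For the sharpness I mimic the $\mathbb{R}^n_+$ example using the functions $M_{-\infty}(I)$. Label the extreme rays of $C$ by $e_1,\ldots,e_\ell$ with chosen vectors $v_{e_j}$ on each, and for each non-zero part $P$ (the relative interior of a face $F_P$) fix a representative $v_P\in P$ of the form $v_P=\sum_{j:\,e_j\subseteq F_P}\mu_j^P v_{e_j}$ with every $\mu_j^P>0$. Pick positive reals $\lambda_P$, all distinct and constrained below, and set
\[
f(x) \;=\; \sum_{j=1}^{\ell}\phi_{e_j}(x)\,v_{e_j}, \qquad \phi_{e_j}(x) \;=\; \sum_{Q\neq\{0\}} a_{j,Q}\,M_{-\infty}(I(Q))(x),
\]
with scalars $a_{j,Q}\geq 0$ still to be determined. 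Because $M_{-\infty}(I(Q))(v_P)>0$ iff $I(Q)\subseteq I(P)$, i.e.\ $Q\unlhd P$ (Lemma~\ref{lem:5.3}), the requirement $f(v_P)=\lambda_P v_P$, satisfied by the particular assignment $\phi_{e_j}(v_P)=\lambda_P\mu_j^P$, reduces to
\[
\sum_{Q\unlhd P} a_{j,Q}\,M_{-\infty}(I(Q))(v_P) \;=\; \lambda_P\mu_j^P
\]
ranging over non-zero parts $P$ and indices $j$. After ordering the non-zero parts $P_1,\ldots,P_{m-1}$ along a linear extension of $\unlhd$, the coefficient matrix is lower triangular with strictly positive diagonal entries $M_{-\infty}(I(P_i))(v_{P_i})>0$, so forward substitution uniquely determines each $a_{j,Q}$.

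The main obstacle is forcing $a_{j,Q}\geq 0$ so that $f$ is non-negative and hence cone-preserving. I argue inductively on $i$: if $e_j\not\subseteq F_{P_i}$, then $\mu_j^{P_i}=0$, and since $F_{P_k}\subseteq F_{P_i}$ whenever $P_k\unlhd P_i$ the inductive hypothesis gives $a_{j,P_k}=0$ for every such $k$, whence $a_{j,P_i}=0$. Otherwise $\mu_j^{P_i}>0$, and the forward-substitution formula expresses $a_{j,P_i}$ as $\lambda_{P_i}\mu_j^{P_i}$ minus a previously-fixed non-negative quantity, divided by $M_{-\infty}(I(P_i))(v_{P_i})>0$; picking $\lambda_{P_i}$ large enough (while keeping all $\lambda_P$'s distinct) then makes $a_{j,P_i}\geq 0$ for every $j$ simultaneously. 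With $a_{j,Q}\geq 0$ secured, each $\phi_{e_j}$ is a non-negative combination of continuous, order-preserving, homogeneous functions $M_{-\infty}(I(Q))$, so $f$ inherits these properties and maps $C$ into $C$, and by construction exhibits the $m-1$ distinct eigenvalues $\lambda_{P_1},\ldots,\lambda_{P_{m-1}}$, matching the upper bound.
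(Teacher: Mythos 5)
Your proposal is correct, and its skeleton matches the paper's: the upper bound is exactly the paper's argument (parts are the relative interiors of the $m$ faces, Corollary \ref{cor:5.4}, discard $\{0\}$), and the construction uses the same ingredients --- one representative of each non-zero part as prospective eigenvector, the functions $M_r(I(Q))$, and the triangularity $M_r(I(Q))(v_P)>0$ iff $Q\unlhd P$ from Lemma \ref{lem:5.3}, exploited by induction along a linear extension of $\unlhd$. Where you genuinely diverge is in the choice of unknowns. The paper writes $f_r(x)=\sum_{P\neq\{0\}}\lambda_P M_r(I(P))(x)\,u^P$ and solves inductively for the \emph{vectors} $u^P\in P$: having fixed eigenvector candidates $z^Q$, it sets $\lambda_Q M_r(I(Q))(z^Q)\,u^Q=\mu_Q z^Q-w^Q$, where $w^Q$ collects the already-determined lower-order terms; the only positivity issue --- that $\mu_Q z^Q-w^Q$ lies in $Q$ for $\mu_Q$ large --- is dispatched in one line, and cone-invariance and monotonicity of $f_r$ are automatic because every coefficient function is non-negative and every $u^P$ lies in $C$. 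You instead expand the output along the extreme rays $v_{e_j}$ and solve a lower-triangular linear system for \emph{scalars} $a_{j,Q}$, which obliges you to prove non-negativity of the solution by a separate induction (the dichotomy $e_j\not\subseteq F_{P_i}$ versus $\mu_j^{P_i}>0$, plus taking $\lambda_{P_i}$ large); this extra work is the price of your decomposition, and it is carried out correctly. What it buys is modest but real: you only ever use $M_{-\infty}(I)=\min_{i\in I}\psi_i$, which is manifestly continuous, order-preserving and homogeneous on all of $C$, so you avoid the appeal to \cite{BNS} needed to extend $M_r(I)$, $-\infty<r<0$, continuously to $\partial C$ (conversely, the paper's general-$r$ version produces maps that are smooth on $C^\circ$, a refinement it remarks on after the proof). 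One small polish point: you silently use that each face $F_P$ is generated by its extreme rays and that a strictly positive combination of all of them lies in the relative interior $P$; both are standard for pointed polyhedral cones, but deserve a sentence.
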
   
\begin{proof}
As the parts of $C$ coincide with the relative interiors of the faces of $C$ we know  that $C$ has $m$ parts, see \cite{lins}.  Omitting the trivial part $\{0\}$ and using Corollary  \ref{cor:5.4} we see that $m-1$ is an upper bound for the size of $\sigma_C(f)$. 
 
To construct an example with $m-1$ distinct eigenvalues on $C$, we let $\psi_1,\ldots,\psi_N$ denote the facet defining functionals of $C$. 
By Lemma \ref{lem:5.3} we know that for each $P\in\mathcal{P}(C)$ we have  that 
\[
P=\{x\in C\colon \psi_i(x)>0\mbox{ if and only if  }i\in I(P)\}.
\]

For each $P\in\mathcal{P}(C)$ with $P\neq \{0\}$ we select $z^P\in P$. 
The idea is to construct a continuous order-preserving homogeneous  map which has the points $z^P$ as eigenvectors with distinct eigenvalues. Let $r\in [-\infty,0)$ and $M_r(I)(x)$ be defined as above for $x\in C$ and $I\subseteq \{1,\ldots,N\}$ non-empty. The continuous order-preserving homogeneous map $f_r\colon C\to C$ will be of the form:
\begin{equation}\label{eq:fr}
f_r(x)=\sum_{P\in\mathcal{P}(C), P\neq\{0\}} \lambda_P M_r(I(P))(x) u^P\mbox{\quad\quad for }x\in C,
\end{equation}
where $\lambda_P>0$ and $u^P\in P$ are chosen appropriately.   

Recall from Lemma \ref{lem:5.3} that $P\unlhd Q$ if and only if $I(P)\subseteq I(Q)$. Thus, for each $x\in Q$ and $P\in\mathcal{P}(C)$ with $P\neq \{0\}$ we have that $M_r(I(P))(x)>0$ if $P\unlhd Q$, and $M_r(I(P))(x)=0$ otherwise. 

We define $\lambda_P>0$ and $u^P\in P$ inductively using the height of $P$ in the finite partially ordered set $(\mathcal{P}(C),\unlhd)$. For $P\in\mathcal{P}(C)$ with height $1$, or equivalently  $\dim P =1$, take $u^P=z^P$ and chose $\lambda_P>0$ such that the positive numbers $\mu_P =\lambda_PM_r(I(P))(u^P)$ are all distinct for $P\in\mathcal{P}(C)$ with height $1$. So, $f(z^P) =\mu_Pz^P$ for those parts $P$.

Suppose that we have already selected $u^P\in P$ and $\lambda_P>0$ for all 
$P\unlhd Q$ with $\{0\}\neq P\neq Q$. Consider 
\[
f_r(z^Q)  =  \sum_{P\in\mathcal{P}(C), P\neq\{0\}} \lambda_P M_r(I(P))(z^Q) u^P
  =  \sum_{P\unlhd Q, P\neq\{0\}} \lambda_P M_r(I(P))(z^Q) u^P
\]
and write 
\[
w^Q = \sum_{P\unlhd Q, \{0\}\neq P\neq Q} \lambda_PM_r(I(P))(z^Q) u^P.
\]
We observe  that for each $\mu_Q>0$ sufficiently large, $\mu_Qz^Q-w^Q\in Q$. Now take $\lambda_Q>0$ and $u^Q\in Q$ such that 
\[
\lambda_QM_r(I(Q))(z^Q) u^Q =\mu_Q z^Q-w^Q. 
\]
Recall that $M_r(I(Q))(z^Q)>0$; so, once we have selected $\mu_Q>0$ and $\lambda_Q>0$ the vector $u^Q\in Q$ is fixed. 

It follows from the construction  that $f_r(z^Q)= w^Q+\mu_Qz^Q -w^Q =\mu_Qz^Q$. 
Thus, we can chose $\mu_Q>0$  such that $f_r$ has $|\mathcal{P}(C)|-1=m-1$ distinct eigenvalues.
 \end{proof} 
We would like to point out  that for $-\infty<r<0$, the function $f$ constructed in the proof of Theorem \ref{thm:5.5} is infinitely differentiable on $C^\circ$. One may wonder whether there exists a continuous order-preserving  homogeneous map on $C$ as in Theorem \ref{thm:5.5} which is continuously differentiable on $C^\circ$ and whose derivative, $Df(\cdot)$,  extends continuously to $0$. The answer is clearly no  if $\dim C\geq 2$, because the limit $L$ of $Df(x)$ as $x\to 0$, would necessarily have $m-1$ distinct eigenvalues. 

Our final result shows that there may be  a countably infinite number 
of distinct points  in the cone spectrum, if the cone is not polyhedral. The proof uses Straszewicz's theorem \cite{Stras}, see also \cite[p.167]{Rock}, which says that the exposed points of a closed convex set $S$ in a finite dimensional vector space $V$ are dense in the extreme points of $S$.
\begin{theorem}\label{thm:5.6}
If $C\subseteq V$ is a closed non-polyhedral cone with non-empty interior, then there exists a continuous order-preserving homogeneous map $f\colon C\to C$ with infinitely many distinct eigenvalues.
\end{theorem}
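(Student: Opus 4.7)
The plan is to apply Straszewicz's theorem to harvest infinitely many exposed extreme rays of $C$ and then assemble $f$ as an explicit series indexed by these rays, with each ray supplying a distinct eigenvalue. Since $C$ has non-empty interior in the finite-dimensional space $V$, the dual cone $C^*$ also has non-empty interior; pick $\psi\in C^{*\circ}$, so $\psi(y)>0$ for every $y\in C\setminus\{0\}$. The cross-section $B=\{y\in C:\psi(y)=1\}$ is then a compact convex subset of $V$, non-polyhedral because $C$ is. In finite dimensions, a compact convex set with only finitely many extreme points is a polytope, so $B$ has infinitely many extreme points, and Straszewicz's theorem then yields infinitely many exposed points of $B$. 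Select a sequence $(e_k)_{k\geq 1}$ of pairwise distinct exposed points of $B$; by passing to a subsequence we may also assume that no $e_k$ is an accumulation point of $\{e_j\}_{j\neq k}$. For each $k$ fix an exposing functional $\phi_k\in C^*$ with $\phi_k(e_k)=0$ and $\phi_k(y)>0$ for $y\in C\setminus\mathbb{R}_{\geq 0}e_k$, rescaled so that $\phi_k(y)\leq\psi(y)$ on all of $C$.

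Next I would build ``bump'' functions that isolate one ray at a time. For a fixed exponent $r<0$ and positive weights $(w_j)$ to be specified, define
\[
T_k(y)=\Bigl(\sum_{j\neq k} w_j\,\phi_j(y)^r\Bigr)^{\!1/r},\qquad y\in C,
\]
with the conventions $\phi_j(y)^r=+\infty$ when $\phi_j(y)=0$ and $(+\infty)^{1/r}=0$. Since $x\mapsto x^r$ and $x\mapsto x^{1/r}$ are both strictly decreasing on $(0,\infty)$ for $r<0$, each $T_k$ is order-preserving, continuous, and homogeneous of degree one; moreover $T_k(y)=0$ precisely when $y$ lies on some ray $R_j=\mathbb{R}_{\geq 0}e_j$ with $j\neq k$, so $T_k(e_j)=0$ for every $j\neq k$ while $T_k(e_k)>0$. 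Choosing any pairwise distinct positive reals $(\lambda_k)$, say $\lambda_k=1/k$, put
\[
f(y)=\sum_{k=1}^\infty \frac{\lambda_k}{T_k(e_k)}\,T_k(y)\,e_k.
\]
Each summand is continuous, order-preserving, homogeneous of degree one, and valued in $C$; provided the series converges uniformly on bounded subsets, $f$ inherits these properties. Evaluation at $y=e_j$ kills every term except the one with $k=j$, giving $f(e_j)=\lambda_j e_j$, so $f$ has infinitely many distinct positive eigenvalues $\lambda_1,\lambda_2,\ldots$.

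The main obstacle is the convergence and finiteness of this construction. Compactness of $B$ forces the $(e_k)$ to have accumulation points, and for indices $j$ close to an index $k$ the value $\phi_j(e_k)$ can be small, making $w_j\phi_j(e_k)^r$ blow up for $r<0$. To overcome this I would set $\alpha_j=\inf_{k\neq j}\phi_j(e_k)$, which is strictly positive under the separation assumption on $(e_k)$ above, and then pick weights of the form $w_j=2^{-j}\alpha_j^{|r|}$. With this choice, each summand satisfies $w_j\phi_j(e_k)^r=2^{-j}(\alpha_j/\phi_j(e_k))^{|r|}\leq 2^{-j}$, so $T_k(e_k)$ is bounded and positive, and analogous bounds hold at any $y\in C$ not on one of the $R_j$, giving uniform convergence on bounded subsets of $C$. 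Verifying these estimates carefully, and then checking continuity of $f$ at points on or near the exposed rays as well as at any accumulation point of $(e_k)$ in $B$, is the principal technical step of the proof.
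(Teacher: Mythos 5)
Your overall strategy (exposed points of a primal cross-section, one ``bump'' function per exposed ray, an infinite series) is sound, but the specific construction has a fatal flaw precisely at the step you defer to the end: with your weights, $f$ \emph{cannot} be continuous at accumulation points of $(e_k)$, and this is structural, not a matter of more careful estimates. Since $B$ is compact, $(e_k)$ has an accumulation point $e\in B$, and by your separation assumption $e\neq e_k$ for all $k$; pass to a subsequence with $e_k\to e$. Then $\alpha_j=\inf_{k\neq j}\phi_j(e_k)\leq\lim_{k\to\infty}\phi_j(e_k)=\phi_j(e)$, so every term of $S_k(e):=\sum_{j\neq k}w_j\phi_j(e)^r$ satisfies $w_j\phi_j(e)^r=2^{-j}\bigl(\alpha_j/\phi_j(e)\bigr)^{|r|}\leq 2^{-j}$, whence $S_k(e)\leq 1$ and $T_k(e)=S_k(e)^{1/r}\geq 1$. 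But $T_k(e_j)=0$ for every $j\neq k$ and $e_j\to e$, so each $T_k$ is discontinuous at $e$; consequently (after replacing $\lambda_k=1/k$ by a summable sequence so that $f$ is even defined) one gets $f(e_j)=\lambda_j e_j\to 0$ while $\psi(f(e))\geq\sum_k\lambda_k T_k(e)/T_k(e_k)>0$, i.e.\ $f(e)\neq 0$. The conflict is intrinsic: any continuous $T_k$ vanishing on all rays $R_j$, $j\neq k$, must vanish at $e$, whereas the normalization $w_j=2^{-j}\alpha_j^{|r|}$ --- introduced exactly to keep $T_k(e_k)$ finite and positive --- forces $T_k(e)\geq 1$. (A secondary, fixable error: with $\lambda_k=1/k$ the outer series diverges at every $y\in C^\circ$, since there $\phi_j(y)\geq\delta\alpha_j$ for some $\delta>0$ and all $j$, giving $T_k(y)\geq\delta$ uniformly in $k$, while $T_k(e_k)$ is bounded above uniformly in $k$; so $\psi$ of the partial sums grows like $\delta\sum_{k\leq N}\lambda_k\to\infty$. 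Take $\lambda_k=2^{-k}$ instead.)

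The repair is to abandon the weighted negative-power mean and use an infimum, which is exactly the device in the paper's proof. Keep your $e_k$ and exposing functionals normalized so that $0\leq\phi_j\leq\psi$ on $C$, and set $T_k(y)=\inf_{j\neq k}\phi_j(y)$. The family $\{\phi_j\colon j\neq k\}$ is bounded in $V^*$, so its closure $K_k\subseteq C^*$ is compact and $T_k(y)=\min_{\phi\in K_k}\phi(y)$ is continuous, order-preserving and homogeneous; moreover $T_k\leq\phi_m$ for every $m\neq k$, so $T_k(e_j)=0$ for $j\neq k$ (and $T_k(e)=0$, resolving the conflict above for free), while $T_k(e_k)=\alpha_k>0$. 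Then $f(y)=\sum_k 2^{-k}\mu_k T_k(y)e_k$, with bounded $\mu_k>0$ chosen so that the numbers $2^{-k}\mu_k T_k(e_k)$ are distinct, converges uniformly on bounded sets (as $T_k\leq\psi$), is continuous, order-preserving and homogeneous, and satisfies $f(e_q)=2^{-q}\mu_q T_q(e_q)e_q$. This is the paper's construction transposed from the dual to the primal: the paper takes exposed points $\phi_k$ of the dual cross-section $\Sigma^*=\{\phi\in C^*\colon\phi(u)=1\}$ converging to some $\psi\in\Sigma^*$, vectors $x^k\in C$ with $\phi_k(x^k)=0$ and $\phi(x^k)>0$ for $\phi\in\Sigma^*\setminus\{\phi_k\}$, and defines $f(x)=\sum_k(1/2)^k\lambda_k\bigl(\inf_{m\neq k}\phi_m(x)\bigr)x^k$. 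The infimum over a compact family of positive functionals is what makes continuity automatic; your primal framework is otherwise fine, including the dominance $\phi_k\leq\psi$ and the positivity $\alpha_k>0$.
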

\begin{proof}
Let $u\in C^\circ$ and note that if $C$ is non-polyhedral, then the dual cone $C^*$ is also non-polyhedral.. Define $\Sigma^*=\{\phi\in C^*\colon \phi(u)=1\}$, which is a compact convex set in $V^*$. As $C^*$ is non-polyhedral, $\Sigma^*$  has infinitely many extreme points. By Straszewicz's theorem \cite{Stras} the exposed points of $\Sigma^*$ are dense in the extreme points of $\Sigma^*$. 
Thus, we can find a sequence $(\phi_k)_k$ in $\Sigma^*$ of distinct exposed points such that $\phi_k\to\psi\in \Sigma^*$ as $k\to\infty$ and $\phi_k\neq \psi$ for all $k\geq 1$. 

As $\phi_k$ is an exposed point of $\Sigma^*$ and $C^{**}=C$, there exists  $x^k\in C$ with $\|x^k\|=1$ such that $\phi_k(x^k)=0$ and $\phi(x^k)>0$ for all $\phi\in \Sigma^*\setminus\{\phi_k\}$. In particular,
\begin{equation}\label{eq:5.6}
\psi(x^k)>0\mbox{\quad and\quad }\phi_m(x^k)>0\mbox{\quad for all }m\neq k.
\end{equation}
Let $f\colon C\to C$ be defined as follows, 
\[
f(x)=\sum_{k=1}^\infty \Big{(}\frac{1}{2}\Big{)}^k \lambda_k \inf_{m\neq k} \phi_m(x)x^k\mbox{\quad\quad for }x\in C,
\]
where $(\lambda_k)_k$ is a bounded sequence of strictly positive reals. For each $k\neq q$, we have that  
\[
\lambda_k\inf_{m\neq k}\phi_m(x^q) =0, 
\]
as $\phi_q(x^q)=0$. 
On the other hand, if $k=q$, then 
\[
\lambda_q\inf_{m\neq q}\phi_m(x^q)>0, 
\]
by (\ref{eq:5.6}) and the fact that $\phi_k\to\psi$ as $k\to\infty$, $\phi_k\neq \phi_q$ for all $k\neq q$, and $\phi_q\neq \psi$.

So, for each $q\geq 1$ we find that 
\[
f(x^q) =\Big{(}\frac{1}{2}\Big{)}^q\lambda_q\inf_{m\neq q}\phi_m(x^q)x^q, 
\]
which shows that $x^q$ is a eigenvector of $f$. By selecting $0<\lambda_q\leq 1$ appropriately we can insure that $f$ has infinitely many distinct eigenvalues. 
\end{proof}


\begin{thebibliography}{1}

\bibitem{Bon} F.F. Bonsall, Linear operators in complete positive cones. 
\emph{Proc. London Math. Soc.} \textbf{8}(3), (1958), 53--75. 

\bibitem{BNS} A.D. Burbanks, R.D. Nussbaum, and C. Sparrow, Extension of order-preserving maps on a cone. \emph{Proc. Royal Soc. Edinburgh Sect. A} \textbf{133}A, (2003), 35--59.


\bibitem{Bur} L. Burlando, Continuity of spectrum and spectral radius in Banach algebras.  In {\em Functional analysis and operator theory (Warsaw, 1992)},  
pp. 53--100, Banach Center Publ., 30, Polish Acad. Sci., Warsaw, 1994.

\bibitem{DG} A. Granas and J. Dugundji, {\em Fixed Point Theory}. Springer Monogr. Math., Springer-Verlag, New York, 2003.

\bibitem{KR}  M.G. Kre\u\i n and M.A. Rutman, 
Linear operators leaving invariant a cone in a Banach space.  
\emph{Amer. Math. Soc. Translation  1950} \textbf{26}, (1950).

\bibitem{lins} B. Lins, {\em  Asymptotic behavior and Denjoy-Wolff theorems for Hilbert metric nonexpansive maps}, Ph.D. thesis, Rutgers University, New Brunswick, USA, 2007.
  
\bibitem{Lo}  K. L\"owner,
\"{U}ber monotone Matrixfunktionen. {\em Math. Z.} {\bf 38}, (1934), 177--216. 

\bibitem{MN1} J. Mallet-Paret and R.D. Nussbaum, Eigenvalues for a class of homogeneous cone maps arising from max-plus operators. 
\emph{Discrete Contin. Dyn. Syst.} \textbf{8} (2002), 519--563.

\bibitem{MN2}  J. Mallet-Paret and R.D. Nussbaum, 
Generalizing the Krein-Rutman theorem, measures of noncompactness and the fixed point index. \emph{J. Fixed Point Theory Appl.} {\bf 7}(1), (2010), 103--143.

\bibitem{Mo1} S. Mouton, On spectral continuity of positive elements.  {\em Studia Math. } {\bf 174}(1),  (2006), 75--84.

\bibitem{Mo2} S. Mouton,  A condition for spectral continuity of positive elements.  {\em Proc. Amer. Math. Soc.}  {\bf 137}(5),  (2009), 1777--1782.

\bibitem{Mur} G.J. Murphy,  Continuity of the spectrum and spectral radius.  {\em Proc. Amer. Math. Soc.} {\bf  82}(4),  (1981), 619--621.
 
\bibitem{New} J.D. Newburgh, The variation of spectra.
{\em Duke Math. J.} {\bf 18}, (1951), 165--176. 

\bibitem{Nu1} R.D. Nussbaum, Eigenvectors of nonlinear positive operators and the linear Krein-Rutman theorem. In E. Fadell and G. Fournier, editors, \emph{Fixed Point Theory}. Lecture Notes in Math. \textbf{886}, 309--331, Springer-Verlag, 1981.

\bibitem{Nus} R.D. Nussbaum, {\em The fixed point index and some applications.}
S\'eminaire de Math\'ematiques Sup\'erieures, 
94. Presses de l'Universit\'e de Montr\'eal, Montreal, QC, 1985. 

\bibitem{NuLAA} R.D.  Nussbaum, Convexity and log convexity for the spectral radius. {\em Linear Algebra Appl.} {\bf 73}, (1986), 59Ð122.

\bibitem{Nmem1} R.D. Nussbaum, 
Hilbert's projective metric and iterated nonlinear maps.
{\em Mem. Amer. Math. Soc.} {\bf 391},(1988), 1--137.

\bibitem{Rick} C.E. Rickart, {\em General theory of Banach algebras}. 
Univ. Ser. in Higher Math., D. van Nostrand Co., Inc., Princeton, N.J., 1960

\bibitem{Rock} R.T. Rockafellar, \emph{Convex Analysis}, Princeton Landmarks in Mathematics, Princeton, N.J., 1997.

\bibitem{Schrijver} A. Schrijver,
\emph{Theory of Linear and Integer Programming}. John Wiley,
Chichester, 1986.


\bibitem{Stras} S. Straszewicz, 
\"{U}ber exponierte Punkte abgeschlossener Punktmengen. {\em Fund. Math.} {\bf 24}, (1935), 139--143. 

\bibitem{Tho} A.C. Thompson, On certain contraction mappings in a partially
ordered vector space. \emph{Proc. Amer. Math. Soc.} \textbf{14}, (1963),
438--443.

\end{thebibliography}
\end{document}